\documentclass[12pt,reqno]{amsart}

\usepackage[letterpaper,margin=1in]{geometry}
\usepackage{lmodern}
\usepackage[T1]{fontenc}
\usepackage{microtype}
\usepackage{amsmath,amssymb,mathtools}
\usepackage{xcolor}

\usepackage{xurl}

\usepackage[
  colorlinks=true,
  linkcolor=blue!70!black,
  citecolor=green!45!black,
  urlcolor=blue!60!black,
  pdfborder={0 0 0},
  pdftitle={Finite-Sample Rigidity for Left-Translated Similarity Orbits and Off-Diagonal Quadratic Realizations},
  pdfauthor={Hiroki Minamide}
]{hyperref}

\newcommand{\doi}[1]{%
  \href{https://doi.org/#1}{doi:\ \nolinkurl{#1}}%
}

\newcommand{\Mat}{\operatorname{Mat}}
\newcommand{\GL}{\operatorname{GL}}

\newcommand{\tr}{\operatorname{tr}}

\newcommand{\Orb}{\operatorname{Orb}}
\newcommand{\Cons}{\operatorname{Cons}}
\newcommand{\Inv}{\operatorname{Inv}}

\newtheorem{theorem}{Theorem}[section]
\newtheorem{proposition}[theorem]{Proposition}
\newtheorem{lemma}[theorem]{Lemma}
\newtheorem{corollary}[theorem]{Corollary}
\theoremstyle{definition}
\newtheorem{definition}[theorem]{Definition}
\theoremstyle{remark}
\newtheorem{remark}[theorem]{Remark}

\makeatletter
\def\@settitle{\begin{center}%
  \baselineskip14\p@\relax
  \bfseries
  \@title
  \end{center}%
}
\def\@setauthors{%
  \begingroup
  \def\thanks{\protect\thanks@warning}%
  \trivlist
  \centering\footnotesize \@topsep30\p@\relax
  \advance\@topsep by -\baselineskip
  \item\relax
  \author@andify\authors
  \def\\{\protect\linebreak}%
  \authors
  \ifx\@empty\contribs
  \else
    ,\penalty-3 \space \@setcontribs
    \@closetoccontribs
  \fi
  \endtrivlist
  \endgroup
}
\makeatother

\title[Finite-Sample Rigidity for Left-Translated Similarity Orbits]
{Finite-Sample Rigidity for Left-Translated Similarity Orbits\\and Off-Diagonal Quadratic Realizations}
\author{Hiroki Minamide}
\address{Department of Liberal Arts, National Institute of Technology, Tokyo College, 1220-2 Kunugida-machi, Hachioji, Tokyo 193-0997, Japan}
\address{Department of Mathematical and Computing Science, School of Computing, Institute of Science Tokyo, 2-12-1 Ookayama, Meguro, Tokyo 152-8550, Japan}
\thanks{\textit{Email address}: \texttt{minamide@tokyo-ct.ac.jp}}
\date{}

\begin{document}

\begin{abstract}
We study finite-sample rigidity for left translates of similarity orbits over commutative rings. Let \(R\) be a commutative ring with \(2\in R^\times\), and let \(C\in\operatorname{Mat}_2(R)\) be cyclic. Given matrices \(G_i=FK_i\), where \(F\in\operatorname{GL}_2(R)\) is common and the underlying points \(K_i\) lie in \(\operatorname{Orb}(C)\), we determine when a finite translated sample has exactly the same multiplier ambiguity as the complete translated orbit. The key structural result is that the orbit differences span \(\mathfrak{sl}_2(R)\), which yields a scalar description of the left stabilizer. We then give a determinantal saturation criterion, expressed directly in the matrices \(G_i\), under which finite-sample consistency reduces to trace equations and one determinant condition. Off-diagonal quadratic operators on pairs of free rank-two modules realize the translated-orbit model exactly. Over fields we obtain sharp sample thresholds under the saturation criterion,
and over finite fields we derive exact saturation probabilities for regular
semisimple and nonzero nilpotent orbits. Torsion restrictions of elliptic isogenies provide a natural arithmetic realization of the rank-two module framework.
\end{abstract}

\maketitle
% Keep the running heads in normal title/name capitalization as well.
\markboth{Hiroki Minamide}{Finite-Sample Rigidity for Left-Translated Similarity Orbits}

% =====================================================================
\section{Introduction}
\label{sec:introduction}
% =====================================================================

Let $R$ be a commutative ring and $C\in\Mat_2(R)$.  Write
\[
  \Orb(C)=\{PCP^{-1}:P\in\GL_2(R)\}
\]
for the similarity orbit of $C$.  The data considered in this paper are
matrices $G_1,\ldots,G_m$ for which there exist a common matrix
$F\in\GL_2(R)$ and underlying orbit points $K_i\in\Orb(C)$ such that
\[
  G_i=FK_i,\qquad i=1,\ldots,m.
\]
The matrices $G_i$ form the given translated sample; neither $F$ nor the
individual points $K_i$ are part of the data.  The $K_i$ may vary
independently, while the same left multiplier $F$ is shared by every member of
the sample.  We ask first how much the complete translated orbit
$F\Orb(C)$ determines $F$, and then when a finite translated sample
already has exactly the same ambiguity.  This is a one-sided problem: the
unknown transformation is a common left multiplier, rather than a conjugating
matrix or a pair of changes of basis.

\subsection{Main results}

Assume throughout the linear-algebraic core that $2\in R^\times$ and that
$C$ is cyclic.  Our basic structural result is
\[
  \operatorname{span}_R
  \{K-K':K,K'\in\Orb(C)\}
  =\mathfrak{sl}_2(R).
\]
Thus the affine directions of a cyclic similarity orbit fill the traceless
submodule.  Combined with the trace pairing, this shows that the left stabilizer of the
complete orbit consists of scalar matrices.

For a finite translated sample $G_i=FK_i$, we identify a saturation condition
under which the sample has exactly the same multiplier ambiguity as the
complete translated orbit.  The key point is that this condition can be
checked directly from the given matrices $G_i$ by a determinantal criterion;
the underlying orbit points $K_i$ need not be known.

Over fields, four affinely saturated samples suffice in the nonzero-trace
case and three linearly saturated samples suffice in the trace-zero cases,
and these bounds are sharp.

\subsection{Realizations and specializations}

The translated-orbit model also arises naturally from quadratic block
operators on pairs of free rank-two modules.  We show that the transverse
graph maps produced by such an operator form exactly a translated similarity
orbit $F\Orb(C)$.  Torsion restrictions of elliptic isogenies provide an
arithmetic realization of this module-theoretic setting.

\subsection{Relation to previous work}

Our problem is adjacent to orbit recovery, matrix equivalence, and
linear-preserver theory, but the transformation class is different.  Orbit
recovery studies an unknown signal modulo a group action
\cite{BandeiraEtAl2023}, while tensor and matrix equivalence seek transformations
between structured objects \cite{GrochowQiao2023}.  Here the similarity orbit
is fixed in advance, the individual orbit points underlying the translated
sample are allowed to vary independently, and one common left multiplier acts
on all of them.

A relevant precursor concerns the linear span of a conjugacy class.  Over an algebraically closed field of characteristic zero,
Skrzy\'nski \cite{Skrzynski2012} proves that the conjugacy class of a
non-scalar matrix spans $\mathfrak{sl}_n$ when the trace is zero and spans
$\Mat_n$ otherwise.  Thus the largeness of a conjugacy class under linear
span is not itself new.  In rank two, the present orbit-difference identity gives an affine counterpart
over an arbitrary commutative ring with $2\in R^\times$: it identifies the full
module of affine directions of a cyclic similarity orbit.  The role of this result in the present paper is to drive a
one-sided left-stabilizer theorem and, subsequently, a finite-subset
determination mechanism.  The common-left-translate problem, its
determinantal saturation criterion, the ring-theoretic scalar ambiguity, and
the sharp finite-field sample results are not treated in \cite{Skrzynski2012}.

Watkins and Li--Pierce study ambient linear maps preserving specified
similarity classes \cite{Watkins1982,LiPierce1994,LiPierce2001}.  Guralnick
classifies invertible linear transformations preserving a finite union of
complete similarity classes over an infinite field \cite{Guralnick1994}.
Here ``finite'' has a different meaning: we keep one prescribed similarity
orbit and ask when a finite subset of one left translate already has the same
multiplier ambiguity as the complete translated orbit.  Hiai treats global
similarity-preserving maps over $\mathbb C$ \cite{Hiai1987}, and Lim extends
this perspective to fields of characteristic zero \cite{Lim1993}.  More
recently, Costara considers linear bijections preserving invertibility on pairs
of similar matrices \cite{Costara2025}, while Schwarz studies ambient
stabilizers of reductive-group orbits \cite{Schwarz2012}.  Our left stabilizer
is the intersection of such an ambient stabilizer with the subgroup of left
multiplications.  These preserver classifications do not address the
finite-subset determination problem considered here.  The determinantal
saturation criterion is specific to this one-sided finite-sample setting.

\subsection{Organization}

Sections~\ref{sec:cyclic-orbits}--\ref{sec:common-multipliers} prove the orbit
rigidity and finite-sample theorem.  Sections~\ref{sec:offdiag}--\ref{sec:transverse-realization}
give the off-diagonal quadratic construction and its exact graph realization.
Sections~\ref{sec:field-specialization}--\ref{sec:finite-geometry} treat fields,
local rings, and finite-field probabilities.  Section~\ref{sec:arithmetic}
connects the module-theoretic model with torsion restrictions of isogenies and
explains the scope of the resulting linear-algebraic tool.  Appendix~A contains
the sharpness constructions, and Appendix~B contains the finite-field counts.

% =====================================================================
\section{Left-translated similarity orbits of cyclic matrices}
\label{sec:cyclic-orbits}
% =====================================================================

Throughout Sections~\ref{sec:cyclic-orbits} and~\ref{sec:common-multipliers},
let $R$ be a commutative ring with identity such that $2\in R^\times$.
We write $\Mat_2(R)$ for the ring of $2\times2$ matrices over $R$ and
$\GL_2(R)$ for its group of units.  Let $I_2$ denote the identity matrix, and let $E_{ij}$ denote the standard
matrix units, with $1$ in the $(i,j)$-entry and $0$ elsewhere.  We write
\[
  \mathfrak{sl}_2(R)
  :=\{X\in\Mat_2(R):\operatorname{tr}(X)=0\}.
\]
Throughout the paper, whenever a matrix $C$ is fixed, we use the standing
notation
\[
  \tau:=\operatorname{tr}(C),\qquad
  \nu:=\det(C),\qquad
  \Delta:=\tau^2-4\nu
\]
when the discriminant is needed.
The purpose of this section is to isolate the rigidity of similarity orbits of
cyclic $2\times2$ matrices.  No field, local-ring, or finiteness hypothesis is
used.

\subsection{Cyclic matrices and companion form}

\begin{definition}
A matrix $C\in\Mat_2(R)$ is \emph{cyclic} if there exists $v\in R^2$ such
that $(v,Cv)$ is an $R$-basis of $R^2$.  Such a vector $v$ is called a
\emph{cyclic vector} for $C$.
\end{definition}

\begin{lemma}[Companion normal form]
\label{lem:cyclic-companion}
If $C\in\Mat_2(R)$ is cyclic, then $C$ is similar over $R$ to
\[
  C_{\tau,\nu}
  :=
  \begin{pmatrix}
    0&-\nu\\
    1&\tau
  \end{pmatrix}.
\]
In particular, two cyclic matrices in $\Mat_2(R)$ with the same characteristic
polynomial are similar.
\end{lemma}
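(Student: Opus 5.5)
The plan is to write $C$ in the basis given by a cyclic vector and apply Cayley--Hamilton. Let $v$ be a cyclic vector for $C$, and let $P\in\Mat_2(R)$ be the matrix with columns $v$ and $Cv$. Since $(v,Cv)$ is an $R$-basis of $R^2$, the matrix $P$ is invertible, i.e.\ $P\in\GL_2(R)$.

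Next we compute $P^{-1}CP$. By construction
\[
  CPe_1=Cv=Pe_2,
\]
so the first column of $P^{-1}CP$ is $(0,1)^{T}$. For the second column we use the Cayley--Hamilton theorem for $2\times2$ matrices over a commutative ring, $C^2-\tau C+\nu I_2=0$. This is a polynomial identity in the entries, so it holds over any commutative ring. It gives
\[
  C(Cv)=C^2v=-\nu v+\tau Cv=P\,(-\nu,\tau)^{T},
\]
so the second column of $P^{-1}CP$ is $(-\nu,\tau)^{T}$. Hence $P^{-1}CP=C_{\tau,\nu}$. The main point to check is that Cayley--Hamilton needs no field or domain hypothesis, and that the coefficients are exactly $\tau=\operatorname{tr}(C)$ and $\nu=\det(C)$; both can be verified by direct expansion.

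The ``in particular'' clause then follows immediately. If $C$ and $C'$ are cyclic with the same characteristic polynomial $x^2-\tau x+\nu$, then each is similar to the same matrix $C_{\tau,\nu}$. Since similarity over $\GL_2(R)$ is transitive, $C$ and $C'$ are similar. The hypothesis $2\in R^\times$ is not needed at any point in this argument.
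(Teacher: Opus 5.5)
Your proof is correct and is essentially the paper's own argument: you write $C$ in the basis $(v,Cv)$ given by a cyclic vector, read off the first column directly, obtain the second column from Cayley--Hamilton, and deduce the final clause by passing both matrices through the common companion matrix $C_{\tau,\nu}$. The change-of-basis matrix $P=(v\;Cv)$ simply makes explicit the paper's phrase ``the matrix of $C$ in this basis.''
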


\begin{proof}
Choose a cyclic vector $v$.  By Cayley--Hamilton,
\[
  C^2-\tau C+\nu I_2=0,
\]
so, relative to the basis $(v,Cv)$,
\[
  Cv=0\cdot v+1\cdot Cv,
  \qquad
  C(Cv)=-\nu\,v+\tau\,Cv.
\]
Hence the matrix of $C$ in this basis is $C_{\tau,\nu}$.  The final assertion
follows because two cyclic matrices with the same characteristic polynomial
are both similar to the same companion matrix.
\end{proof}

For $C\in\Mat_2(R)$, let
\[
  \Orb(C)
  :=\{PCP^{-1}:P\in\GL_2(R)\}
\]
be its similarity orbit, and define its \emph{orbit-difference module} by
\[
  D_C
  :=\operatorname{span}_R\{K-K':K,K'\in\Orb(C)\}.
\]
Since $C\in\Orb(C)$, equivalently
\[
  D_C
  =\operatorname{span}_R\{K-C:K\in\Orb(C)\}.
\]

\begin{lemma}[Similarity covariance]
\label{lem:difference-covariance}
For every $P\in\GL_2(R)$,
\[
  D_{PCP^{-1}}=PD_CP^{-1},
  \qquad
  P\mathfrak{sl}_2(R)P^{-1}=\mathfrak{sl}_2(R).
\]
\end{lemma}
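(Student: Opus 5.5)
The plan is to verify both identities directly from the definitions. The first rests on the observation that conjugation by a fixed $P\in\GL_2(R)$ maps the orbit $\Orb(C)$ bijectively onto $\Orb(PCP^{-1})$. The second rests on invariance of the trace under similarity.

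For the first identity, let $c_P\colon\Mat_2(R)\to\Mat_2(R)$ denote the map $X\mapsto PXP^{-1}$. It is an $R$-linear automorphism with inverse $c_{P^{-1}}$. First I show that $\Orb(PCP^{-1})=P\,\Orb(C)\,P^{-1}$. An element of the left side has the form $Q(PCP^{-1})Q^{-1}=(QP)C(QP)^{-1}$. This equals $P\bigl((P^{-1}QP)C(P^{-1}QP)^{-1}\bigr)P^{-1}$, so it lies in $P\,\Orb(C)\,P^{-1}$. Conversely, $P(QCQ^{-1})P^{-1}=(PQP^{-1})(PCP^{-1})(PQP^{-1})^{-1}$, and $PQP^{-1}\in\GL_2(R)$.

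Since $c_P$ is additive, the set of differences $\{K-K':K,K'\in\Orb(PCP^{-1})\}$ equals $c_P$ applied to the set of differences of $\Orb(C)$. Since $c_P$ is $R$-linear, it carries the $R$-span of a set onto the $R$-span of its image. This gives $D_{PCP^{-1}}=c_P(D_C)=PD_CP^{-1}$.

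For the second identity, note that $\tr(PXP^{-1})=\tr(XP^{-1}P)=\tr(X)$ by $\tr(AB)=\tr(BA)$, which holds over any commutative ring. Hence $c_P$ maps $\mathfrak{sl}_2(R)$ into itself. Applying the same argument to $P^{-1}$ gives the reverse inclusion, since $X=P(P^{-1}XP)P^{-1}$. Therefore equality holds.

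There is no real obstacle. The only point needing care is to prove the orbit equality $\Orb(PCP^{-1})=P\,\Orb(C)\,P^{-1}$ in both directions, rather than just one inclusion, so that the spans coincide exactly. The hypothesis $2\in R^\times$ is not used here.
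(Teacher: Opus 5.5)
Your proposal is correct and follows the paper's route: the first identity comes from $\Orb(PCP^{-1})=P\,\Orb(C)\,P^{-1}$ and the second from trace invariance under similarity. You simply write out the details the paper leaves implicit, namely both inclusions of the orbit equality and the $R$-linearity of conjugation carrying spans onto spans.
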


\begin{proof}
The first identity follows from
\[
  \Orb(PCP^{-1})=P\Orb(C)P^{-1},
\]
and the second from invariance of trace under similarity.
\end{proof}

\subsection{Generation of the adjoint module by orbit differences}

The following elementary calculation is the key rank-two input.

\begin{lemma}[Companion orbit differences]
\label{lem:companion-orbit-differences}
For the companion matrix $C_{\tau,\nu}$ of
Lemma~\ref{lem:cyclic-companion},
\[
  D_{C_{\tau,\nu}}=\mathfrak{sl}_2(R).
\]
\end{lemma}

\begin{proof}
Define
\[
  U_+:=\begin{pmatrix}1&1\\0&1\end{pmatrix},
  \qquad
  U_-:=\begin{pmatrix}1&-1\\0&1\end{pmatrix}.
\]
A direct computation gives
\[
  U_+C_{\tau,\nu}U_+^{-1}-C_{\tau,\nu}
  =
  \begin{pmatrix}
    1&\tau-1\\
    0&-1
  \end{pmatrix}
\]
and
\[
  U_-C_{\tau,\nu}U_-^{-1}-C_{\tau,\nu}
  =
  \begin{pmatrix}
    -1&-\tau-1\\
    0&1
  \end{pmatrix}.
\]
Their sum is $-2E_{12}$, hence $E_{12}\in D_{C_{\tau,\nu}}$ because
$2\in R^\times$.  Subtracting $(\tau-1)E_{12}$ from the first difference gives
\[
  E_{11}-E_{22}\in D_{C_{\tau,\nu}}.
\]
Moreover,
\[
  \begin{pmatrix}-1&0\\0&1\end{pmatrix}
  C_{\tau,\nu}
  \begin{pmatrix}-1&0\\0&1\end{pmatrix}^{-1}
  -C_{\tau,\nu}
  =2\nu E_{12}-2E_{21}.
\]
Thus $E_{21}\in D_{C_{\tau,\nu}}$.  Since
\[
  E_{12},\qquad E_{21},\qquad E_{11}-E_{22}
\]
form an $R$-basis of $\mathfrak{sl}_2(R)$, we obtain
$\mathfrak{sl}_2(R)\subseteq D_{C_{\tau,\nu}}$.  The reverse inclusion follows
because similarity preserves trace, so every orbit difference is traceless.
\end{proof}

\begin{theorem}[Orbit-difference generation]
\label{thm:orbit-difference-generation}
Let $C\in\Mat_2(R)$ be cyclic.  Then
\[
  D_C=\mathfrak{sl}_2(R).
\]
\end{theorem}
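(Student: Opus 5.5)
The plan is to reduce to the companion case via the two preceding lemmas. Since $C$ is cyclic, Lemma~\ref{lem:cyclic-companion} provides $P\in\GL_2(R)$ with $C=PC_{\tau,\nu}P^{-1}$, where $\tau=\tr(C)$ and $\nu=\det(C)$. The companion matrix has the same trace and determinant as $C$, so the parameters match the standing notation.

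Next I apply Lemma~\ref{lem:difference-covariance} with this $P$ and the matrix $C_{\tau,\nu}$, which gives
\[
  D_C=D_{PC_{\tau,\nu}P^{-1}}=P\,D_{C_{\tau,\nu}}\,P^{-1}.
\]
Lemma~\ref{lem:companion-orbit-differences} identifies $D_{C_{\tau,\nu}}$ with $\mathfrak{sl}_2(R)$. Hence $D_C=P\,\mathfrak{sl}_2(R)\,P^{-1}$, and the second identity of Lemma~\ref{lem:difference-covariance} turns this into $\mathfrak{sl}_2(R)$.

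No step is a real obstacle, since all the computational content sits in Lemma~\ref{lem:companion-orbit-differences}. The only point to check is the covariance identity $\Orb(PCP^{-1})=P\Orb(C)P^{-1}$. This holds because $Q\mapsto QP$ is a bijection of $\GL_2(R)$. Conjugation by $P$ is also $R$-linear, so it carries the $R$-span of the differences onto the $R$-span of their conjugates. Both facts are already built into Lemma~\ref{lem:difference-covariance}, so the argument is a three-line composition of the earlier results.
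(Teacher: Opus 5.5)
Your proposal is correct and is essentially the paper's own proof: reduce to the companion matrix $C_{\tau,\nu}$ via Lemma~\ref{lem:cyclic-companion}, invoke Lemma~\ref{lem:companion-orbit-differences}, and transport back with Lemma~\ref{lem:difference-covariance}. (Since $C$ and $C_{\tau,\nu}$ are similar, their orbits actually coincide, so $D_C=D_{C_{\tau,\nu}}$ holds even without the conjugation by $P$.)
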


\begin{proof}
By Lemma~\ref{lem:cyclic-companion}, there exists $P\in\GL_2(R)$ such that
\[
  PCP^{-1}=C_{\tau,\nu}.
\]
Lemma~\ref{lem:companion-orbit-differences} gives
$D_{C_{\tau,\nu}}=\mathfrak{sl}_2(R)$, and
Lemma~\ref{lem:difference-covariance} transports this equality back to $C$.
\end{proof}

\begin{corollary}[Affine span of a cyclic similarity orbit]
\label{cor:affine-span-orbit}
Let $C\in\Mat_2(R)$ be cyclic.  The $R$-affine span of $\Orb(C)$ is the
trace hyperplane through $C$:
\[
  \operatorname{Aff}_R(\Orb(C))
  =
  \{X\in\Mat_2(R):\tr(X)=\tr(C)\}.
\]
\end{corollary}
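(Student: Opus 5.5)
We need to define the $R$-affine span. The natural definition is the set of affine combinations $\sum_j r_j K_j$ with $K_j\in\Orb(C)$ and $\sum_j r_j=1$ (finite sums). Equivalently, it is $C+D_C$. The plan is therefore to prove this equivalence first, and then invoke Theorem~\ref{thm:orbit-difference-generation}.

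\emph{Reduction to $C+D_C$.} Let $X=\sum_j r_jK_j$ with $\sum_j r_j=1$. Then
\[
X=C+\sum_j r_j(K_j-C)\in C+D_C.
\]
Conversely, take $X=C+\sum_j s_j(K_j-K'_j)$ with $K_j,K'_j\in\Orb(C)$. This can be written as
\[
X=1\cdot C+\sum_j s_jK_j+\sum_j(-s_j)K'_j,
\]
whose coefficients sum to $1$, so $X$ is an affine combination of orbit points. Hence
\[
\operatorname{Aff}_R(\Orb(C))=C+D_C.
\]

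\emph{Identifying the hyperplane.} By Theorem~\ref{thm:orbit-difference-generation}, $D_C=\mathfrak{sl}_2(R)$, so it suffices to show
\[
C+\mathfrak{sl}_2(R)=\{X:\tr(X)=\tr(C)\}.
\]
If $X=C+Y$ with $\tr Y=0$, then $\tr X=\tr C$ by linearity of the trace. Conversely, if $\tr X=\tr C$, then $Y:=X-C$ is traceless and $X=C+Y$.

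No step is a real obstacle once Theorem~\ref{thm:orbit-difference-generation} is available. The only point needing care is to state the definition of the $R$-affine span explicitly, since over a ring one cannot normalize coefficients. Because the definition allows arbitrary ring coefficients summing to $1$, the reduction to $C+D_C$ goes through without dividing by anything.
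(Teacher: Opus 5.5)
Your proof is correct and follows the paper's argument: the paper likewise identifies the translation space of the affine span with $D_C$, invokes Theorem~\ref{thm:orbit-difference-generation}, and uses $C\in\Orb(C)$ to place the resulting hyperplane through $C$. Your explicit check that $\operatorname{Aff}_R(\Orb(C))=C+D_C$, with ring coefficients summing to $1$, just spells out the step the paper treats as definitional.
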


\begin{proof}
By Theorem~\ref{thm:orbit-difference-generation}, the translation
space of the affine span is
\[
  D_C=\mathfrak{sl}_2(R).
\]
Since $C\in\Orb(C)$, the affine span is therefore the trace hyperplane
through $C$, namely
\[
  \{X\in\Mat_2(R):\tr(X)=\tr(C)\}.
\]
\end{proof}

\begin{remark}[Cyclicity]
The cyclicity hypothesis cannot be omitted.  Over any nonzero field, a scalar
matrix $C=aI_2$ is noncyclic, its similarity orbit is the singleton $\{C\}$,
and hence its orbit-difference module is zero.
\end{remark}

\begin{remark}[Adjoint-module interpretation]
\label{rem:adjoint-interpretation}
Because $2\in R^\times$, the trace sequence splits as a sequence of
conjugation modules:
\[
  0\longrightarrow\mathfrak{sl}_2(R)
  \longrightarrow\Mat_2(R)
  \xrightarrow{\operatorname{tr}}R
  \longrightarrow0,
\]
with splitting $a\mapsto (a/2)I_2$.  Hence
\[
  \Mat_2(R)=RI_2\oplus\mathfrak{sl}_2(R),
\]
where the first summand is the trivial module and the second carries the
adjoint action.  Define
\[
  C^\circ:=C-\frac{\operatorname{tr}(C)}2I_2.
\]
Subtracting a scalar matrix preserves cyclicity, so $C^\circ$ is cyclic.  For
every $P\in\GL_2(R)$,
\[
  PC^\circ P^{-1}
  =PCP^{-1}-\frac{\operatorname{tr}(C)}2I_2.
\]
Hence
\[
  \Orb(C^\circ)
  =\left\{K-\frac{\operatorname{tr}(C)}2I_2:K\in\Orb(C)\right\},
  \qquad
  D_{C^\circ}=D_C.
\]
Also $C^\circ$ and $-C^\circ$ have the same characteristic polynomial.
Hence Lemma~\ref{lem:cyclic-companion} gives
$-C^\circ\sim C^\circ$.  Therefore
\[
  2C^\circ=C^\circ-(-C^\circ)\in D_{C^\circ}=D_C,
\]
and $C^\circ\in D_C$ because $2$ is a unit.  It follows that
Theorem~\ref{thm:orbit-difference-generation} is equivalently the assertion
\[
  \operatorname{span}_R
  \{PC^\circ P^{-1}:P\in\GL_2(R)\}
  =\mathfrak{sl}_2(R).
\]
Thus the traceless part of a cyclic matrix generates the adjoint summand under
conjugation.
\end{remark}

\begin{remark}[Characteristic two]
The hypothesis $2\in R^\times$ is not merely cosmetic.  In characteristic
$2$, the scalar line meets $\mathfrak{sl}_2$ nontrivially and the preceding
splitting degenerates.  Orbit-difference generation can then fail; for example,
over $\mathbb F_2$ the cyclic nilpotent matrix
\[
  \begin{pmatrix}0&0\\1&0\end{pmatrix}
\]
has an orbit-difference span of dimension $2$, whereas
$\dim_{\mathbb F_2}\mathfrak{sl}_2(\mathbb F_2)=3$.
\end{remark}

\subsection{The trace pairing and scalar symmetries}

\begin{lemma}[Trace annihilator of the adjoint module]
\label{lem:trace-annihilator}
Under the bilinear pairing
\[
  \langle X,Y\rangle_{\mathrm{tr}}=\operatorname{tr}(XY)
\]
on $\Mat_2(R)$, one has
\[
  \mathfrak{sl}_2(R)^\perp=RI_2.
\]
\end{lemma}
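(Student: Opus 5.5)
We prove the two inclusions separately, working directly with matrix entries.

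For $RI_2\subseteq\mathfrak{sl}_2(R)^\perp$: if $X=aI_2$ and $Y\in\mathfrak{sl}_2(R)$, then
\[
  \tr(XY)=a\,\tr(Y)=0 .
\]

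For the reverse inclusion, take $X=(x_{ij})\in\Mat_2(R)$ with $\tr(XY)=0$ for every traceless $Y$. We test $X$ against the $R$-basis
\[
  E_{12},\qquad E_{21},\qquad E_{11}-E_{22}
\]
of $\mathfrak{sl}_2(R)$, already used in Lemma~\ref{lem:companion-orbit-differences}. The standard identity $\tr(XE_{ij})=x_{ji}$ gives three equations:
\begin{itemize}
  \item $\tr(XE_{12})=x_{21}=0$;
  \item $\tr(XE_{21})=x_{12}=0$;
  \item $\tr\bigl(X(E_{11}-E_{22})\bigr)=x_{11}-x_{22}=0$.
\end{itemize}
Hence $X$ is diagonal with equal diagonal entries, so $X=x_{11}I_2\in RI_2$.

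Since these are explicit matrix-unit identities valid over any commutative ring, the argument needs no field hypothesis. The only point requiring care is that the three test matrices actually lie in $\mathfrak{sl}_2(R)$, which is immediate. The argument does not use $2\in R^\times$; that hypothesis enters only later, when this lemma is combined with Theorem~\ref{thm:orbit-difference-generation}. There is no substantive obstacle.
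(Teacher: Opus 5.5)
Your proposal is correct and follows the paper's proof: you test against $E_{12}$, $E_{21}$ and $E_{11}-E_{22}$ to kill the off-diagonal entries and equate the diagonal ones, and you check the reverse inclusion via $\operatorname{tr}(aY)=a\operatorname{tr}(Y)=0$. Your remark that this lemma does not need $2\in R^\times$ is also accurate.
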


\begin{proof}
Let
\[
  A=\begin{pmatrix}a&b\\c&d\end{pmatrix}
\]
be orthogonal to every traceless matrix.  Pairing with $E_{12}$ and $E_{21}$
gives $c=b=0$, while pairing with
$\operatorname{diag}(1,-1)$ gives $a=d$.  Hence $A$ is scalar.  Conversely,
every scalar matrix is orthogonal to every traceless matrix.
\end{proof}

\begin{definition}
For cyclic $C\in\Mat_2(R)$, define the \emph{scalar-symmetry group}
\[
  \Sigma_C(R)
  :=\{t\in R^\times:tC\text{ is similar to }C\}.
\]
This is a subgroup of $R^\times$: similarity is preserved by multiplication
by a central unit, so $s,t\in\Sigma_C(R)$ imply $st\in\Sigma_C(R)$, and
the same observation applied to an inverse similarity gives
$t^{-1}\in\Sigma_C(R)$.
\end{definition}

\begin{proposition}[Scalar-similarity criterion]
\label{prop:scalar-similarity-general}
Let $C\in\Mat_2(R)$ be cyclic.  Then
\[
  \Sigma_C(R)
  =
  \left\{
    t\in R^\times:
    (t-1)\tau=0,\quad
    (t^2-1)\nu=0
  \right\}.
\]
\end{proposition}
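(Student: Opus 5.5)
We prove the two inclusions separately. Both directions rest on the companion normal form (Lemma~\ref{lem:cyclic-companion}) and on the fact that multiplying by a unit preserves cyclicity.

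\emph{Forward inclusion.} Suppose $t\in\Sigma_C(R)$, so $tC=PCP^{-1}$ for some $P\in\GL_2(R)$. Similar matrices have equal trace and determinant. Hence
\[
  t\tau=\tau,\qquad t^2\nu=\nu,
\]
which are exactly the conditions $(t-1)\tau=0$ and $(t^2-1)\nu=0$. This direction uses nothing beyond invariance of the characteristic polynomial under similarity.

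\emph{Reverse inclusion.} Let $t\in R^\times$ satisfy both conditions. First we check that $tC$ is cyclic. If $v$ is a cyclic vector for $C$, then $(v,tCv)$ is obtained from the basis $(v,Cv)$ by the invertible change of basis $\operatorname{diag}(1,t)$. So $(v,tCv)$ is again a basis, and $v$ is a cyclic vector for $tC$. Next we compare characteristic polynomials. The characteristic polynomial of $tC$ is
\[
  x^2-t\tau x+t^2\nu,
\]
and the hypotheses rewrite this as $x^2-\tau x+\nu$, the characteristic polynomial of $C$. Lemma~\ref{lem:cyclic-companion} then says that two cyclic matrices with the same characteristic polynomial are similar, so $tC\sim C$ and $t\in\Sigma_C(R)$.

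\emph{Main obstacle.} The only subtle point is that the final clause of Lemma~\ref{lem:cyclic-companion} requires both matrices to be cyclic. This is why cyclicity of $tC$ has to be verified explicitly, via the unit $t$. Otherwise the argument is a direct identification of coefficients. Over a general commutative ring this needs no case split on zero divisors, because the conditions are stated as annihilation equations rather than as $t=1$ or $t^2=1$.
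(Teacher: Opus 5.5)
Your proof is correct and matches the paper's argument: you compare trace and determinant for the forward inclusion, and for the reverse you combine cyclicity of $tC$ with equal characteristic polynomials and Lemma~\ref{lem:cyclic-companion}. Your $\operatorname{diag}(1,t)$ change-of-basis check only spells out the cyclicity step that the paper states in one line.
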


\begin{proof}
If $tC$ is similar to $C$, equality of traces and determinants gives
\[
  t\tau=\tau,
  \qquad
  t^2\nu=\nu,
\]
which yields the displayed conditions.

Conversely, suppose these conditions hold.  Then
\[
  \chi_{tC}(Z)
  =Z^2-t\tau Z+t^2\nu
  =Z^2-\tau Z+\nu
  =\chi_C(Z).
\]
Since $t$ is a unit, $tC$ is cyclic whenever $C$ is cyclic.  By
Lemma~\ref{lem:cyclic-companion}, the cyclic matrices $C$ and $tC$ are similar.
\end{proof}

Define the \emph{left stabilizer} of the similarity orbit by
\[
  \Gamma_C(R)
  :=\{H\in\GL_2(R):H\Orb(C)=\Orb(C)\}.
\]

\begin{theorem}[Cyclic-orbit rigidity]
\label{thm:cyclic-orbit-rigidity}
Let $C\in\Mat_2(R)$ be cyclic.  Then
\[
  \Gamma_C(R)
  =\Sigma_C(R)I_2
  =\left\{
    tI_2:
    t\in R^\times,\ 
    (t-1)\tau=0,\ 
    (t^2-1)\nu=0
  \right\}.
\]
\end{theorem}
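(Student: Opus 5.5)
The plan is to prove the two inclusions separately, with the reverse inclusion being immediate from Proposition~\ref{prop:scalar-similarity-general}. If $t\in\Sigma_C(R)$, then $tC=QCQ^{-1}$ for some $Q\in\GL_2(R)$. Hence for every $P\in\GL_2(R)$ we have
\[
  tPCP^{-1}=(PQ)C(PQ)^{-1}\in\Orb(C),
\]
so $tI_2\,\Orb(C)\subseteq\Orb(C)$. Applying the same argument to $t^{-1}\in\Sigma_C(R)$ gives the opposite inclusion, so $tI_2\in\Gamma_C(R)$. The second description of $\Gamma_C(R)$ is then just Proposition~\ref{prop:scalar-similarity-general} restated.

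For the forward inclusion, let $H\in\Gamma_C(R)$. The first step is to show that $H$ is scalar using the trace pairing. Since $H\Orb(C)=\Orb(C)$ and similarity preserves trace, we have $\tr(HK)=\tau$ for every $K\in\Orb(C)$. Subtracting, $\tr(H(K-K'))=0$ for all $K,K'\in\Orb(C)$. By $R$-bilinearity this extends to all of $D_C$, and Theorem~\ref{thm:orbit-difference-generation} identifies $D_C$ with $\mathfrak{sl}_2(R)$. Thus $H\in\mathfrak{sl}_2(R)^\perp$, and Lemma~\ref{lem:trace-annihilator} gives $H=tI_2$ for some $t\in R$. Because $H$ is invertible, $t^2=\det H\in R^\times$, so $t\in R^\times$.

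It then remains to show that $t\in\Sigma_C(R)$. Since $C\in\Orb(C)$, we get $tC=HC\in\Orb(C)$, so $tC$ is similar to $C$ by definition. This gives $tI_2\in\Sigma_C(R)I_2$.

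The only substantive input is the passage from ``$\tr(HX)=0$ on orbit differences'' to ``$H$ is scalar''. This step is entirely carried by Theorem~\ref{thm:orbit-difference-generation} together with Lemma~\ref{lem:trace-annihilator}, so I expect no real obstacle. The one point to check carefully is that we use only $H\Orb(C)\subseteq\Orb(C)$ to get the trace identity, which is weaker than the equality assumed in the definition, so nothing is lost there.
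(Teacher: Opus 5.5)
Your proposal is correct and follows essentially the same route as the paper: the trace pairing makes $H$ orthogonal to $D_C=\mathfrak{sl}_2(R)$, which forces $H$ to be scalar, and then $tI_2\in\Gamma_C(R)$ is matched with $tC\sim C$. The only differences are minor: you work with $H$ directly rather than with $H-I_2$, and you spell out both directions of the equivalence $tI_2\in\Gamma_C(R)\Leftrightarrow t\in\Sigma_C(R)$ (using that $\Sigma_C(R)$ is closed under inverses), which the paper states in one line.
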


\begin{proof}
Let $H\in\Gamma_C(R)$.  For every $K\in\Orb(C)$, one has
$HK\in\Orb(C)$, and therefore
\[
  \operatorname{tr}(HK)=\operatorname{tr}(K)=\tau.
\]
Hence
\[
  \operatorname{tr}((H-I_2)K)=0
\]
for all $K\in\Orb(C)$.  Subtracting the equations corresponding to two
orbit elements shows that $H-I_2$ is orthogonal to the orbit-difference module
$D_C$.  By Theorem~\ref{thm:orbit-difference-generation} and
Lemma~\ref{lem:trace-annihilator},
\[
  H-I_2\in RI_2.
\]
Thus $H=tI_2$ for some $t\in R$.  Since $H$ is invertible, $t\in R^\times$.
The condition $tI_2\in\Gamma_C(R)$ is equivalent to $tC\sim C$, and hence to
$t\in\Sigma_C(R)$.  Proposition~\ref{prop:scalar-similarity-general} gives the
explicit formula.
\end{proof}

% =====================================================================
\section{Common left multipliers and finite-sample rigidity}
\label{sec:common-multipliers}
% =====================================================================

Fix cyclic $C\in\Mat_2(R)$.  The data are $G_1,\ldots,G_m\in\Mat_2(R)$.
We assume that they admit a common-left-translate representation
\[
  G_i=FK_i,
  \qquad K_i\in\Orb(C),
  \qquad F\in\GL_2(R),
  \qquad i=1,\ldots,m.
\]
The matrix $F$ and the individual orbit points $K_i$ are not included in the
data.  The representation records the structural assumption that all members
of the translated sample arise from the same left multiplier, while the
underlying orbit points may vary independently.

\subsection{Complete-orbit ambiguity}

The complete translated orbit is
\[
  F\Orb(C)=\{FK:K\in\Orb(C)\}.
\]

\begin{proposition}[Equality of translated supports]
\label{prop:translated-supports-general}
For $F,F'\in\GL_2(R)$,
\[
  F\Orb(C)=F'\Orb(C)
  \quad\Longleftrightarrow\quad
  F^{-1}F'\in\Gamma_C(R).
\]
Thus the complete translated orbit determines exactly the right coset
\[
  F\Gamma_C(R).
\]
\end{proposition}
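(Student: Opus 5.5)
The argument is a direct group-theoretic unwinding of the definition of $\Gamma_C(R)$; no input beyond Theorem~\ref{thm:cyclic-orbit-rigidity} is needed, and in fact only the definition of the left stabilizer is used.

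Set $H:=F^{-1}F'\in\GL_2(R)$. Since left multiplication by the invertible matrix $F^{-1}$ is a bijection of $\Mat_2(R)$, the equality $F\Orb(C)=F'\Orb(C)$ holds if and only if
\[
  \Orb(C)=F^{-1}F'\Orb(C)=H\Orb(C),
\]
and this is precisely the condition $H\in\Gamma_C(R)$.

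The only point to check is that set equality is preserved under applying $F^{-1}$. This holds because $X\mapsto F^{-1}X$ is bijective, with inverse $X\mapsto FX$, and it maps $F'K$ to $HK$ and $FK$ to $K$. This yields the stated equivalence.

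For the coset statement, I would first note that $\Gamma_C(R)$ is a subgroup of $\GL_2(R)$: it is the stabilizer of the set $\Orb(C)$ under the left action of $\GL_2(R)$ on subsets of $\Mat_2(R)$. Alternatively, Theorem~\ref{thm:cyclic-orbit-rigidity} identifies it explicitly with the group $\Sigma_C(R)I_2$.

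Given this, the equivalence shows that the set
\[
  \{F'\in\GL_2(R):F'\Orb(C)=F\Orb(C)\}
\]
is exactly $\{F':F^{-1}F'\in\Gamma_C(R)\}=F\Gamma_C(R)$. Hence the complete translated orbit determines $F$ precisely up to this coset and no further. Since $\Gamma_C(R)$ consists of scalars, which are central, this coset also equals $\Gamma_C(R)F$, so the left/right distinction is immaterial.

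I expect no genuine obstacle. The only care needed is to keep the order of multiplication correct, writing $F^{-1}F'$ rather than $F'F^{-1}$, before invoking centrality.
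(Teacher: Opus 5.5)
Your proof is correct and is essentially the paper's argument: left-multiplying the equality of supports by $F^{-1}$ gives $\Orb(C)=F^{-1}F'\Orb(C)$, and the converse follows directly from the definition of $\Gamma_C(R)$. Your extra remarks are accurate but not needed for the equivalence: $\Gamma_C(R)$ is a subgroup, and because it is scalar by Theorem~\ref{thm:cyclic-orbit-rigidity} it is central, so the left/right coset wording does not matter.
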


\begin{proof}
If the supports are equal, multiplication on the left by $F^{-1}$ gives
\[
  \Orb(C)=F^{-1}F'\Orb(C),
\]
so $F^{-1}F'\in\Gamma_C(R)$.  The converse is immediate from the definition of
$\Gamma_C(R)$.
\end{proof}

\subsection{Determinantal detection of saturation}

For matrices $X_1,\ldots,X_r\in\Mat_2(R)$, choose any $R$-basis of
$\Mat_2(R)$ and let
\[
  M(X_1,\ldots,X_r)\in\Mat_{4\times r}(R)
\]
be the matrix whose $j$th column is the coordinate vector of $X_j$.  Define
\[
  \mathfrak D_3(X_1,\ldots,X_r)\subseteq R
\]
to be the ideal of $R$ generated by the $3\times3$ minors of
$M(X_1,\ldots,X_r)$.  This ideal is independent of the chosen basis.

\begin{lemma}[Left-multiplication invariance]
\label{lem:determinantal-invariance}
For $F\in\GL_2(R)$,
\[
  \mathfrak D_3(FX_1,\ldots,FX_r)
  =\mathfrak D_3(X_1,\ldots,X_r).
\]
\end{lemma}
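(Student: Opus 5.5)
Left multiplication by $F$ is an $R$-linear automorphism $L_F\colon X\mapsto FX$ of the free module $\Mat_2(R)\cong R^4$. The plan is to express this in coordinates: if $A_F\in\GL_4(R)$ is the matrix of $L_F$ in the chosen basis, then
\[
  M(FX_1,\ldots,FX_r)=A_F\,M(X_1,\ldots,X_r).
\]
The statement is then the standard fact that determinantal (Fitting-type) ideals of minors are invariant under left multiplication by an invertible matrix.

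The first step is to settle basis independence, which the text asserts and which I will justify in the same way. A change of basis replaces $M$ by $QM$ for some $Q\in\GL_4(R)$. So both basis independence and the lemma reduce to a single claim: for $A\in\GL_4(R)$ and $M\in\Mat_{4\times r}(R)$, the ideal generated by the $3\times3$ minors of $AM$ equals that of $M$.

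The second step proves this claim with the Cauchy--Binet formula. Fix row indices $I$ and column indices $J$ with $|I|=|J|=3$. Then
\[
  \det\bigl((AM)_{I,J}\bigr)=\sum_{|S|=3}\det(A_{I,S})\det(M_{S,J}).
\]
Every $3\times3$ minor of $AM$ is therefore an $R$-linear combination of $3\times3$ minors of $M$, which gives $\mathfrak D_3(AM)\subseteq\mathfrak D_3(M)$. Applying the same inclusion with $A^{-1}$ to $AM$ gives the reverse inclusion.

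The step needing the most care is the bookkeeping of Cauchy--Binet for non-square factors. An alternative route is to note that $\bigwedge^3$ of $AM$ factors as $\bigwedge^3 A\circ\bigwedge^3 M$, since exterior powers are functorial over any commutative ring. Nothing here uses $2\in R^\times$ or cyclicity; the lemma is purely a statement about free modules.
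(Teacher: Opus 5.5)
Your proposal is correct and follows the paper's proof. In both, left multiplication by $F$ acts on the coordinate matrix by an invertible $4\times4$ matrix, and the ideal of $3\times3$ minors is unchanged by this. Your Cauchy--Binet argument, with $A^{-1}$ giving the reverse inclusion, supplies the detail the paper treats as standard, and it also justifies the basis independence the paper only asserts.
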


\begin{proof}
Left multiplication by $F$ is an $R$-module automorphism of $\Mat_2(R)$.
In any chosen basis, it multiplies the coordinate matrix
$M(X_1,\ldots,X_r)$ on the left by an invertible $4\times4$ matrix.
Hence the ideal generated by its $3\times3$ minors is unchanged.
\end{proof}

\begin{lemma}[Determinantal generation criterion]
\label{lem:determinantal-generation}
For $X_1,\ldots,X_r\in\mathfrak{sl}_2(R)$,
\[
  \operatorname{span}_R\{X_1,\ldots,X_r\}=\mathfrak{sl}_2(R)
  \quad\Longleftrightarrow\quad
  \mathfrak D_3(X_1,\ldots,X_r)=R.
\]
\end{lemma}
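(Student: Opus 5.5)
Since $\mathfrak D_3$ does not depend on the basis, I will compute it in a basis adapted to the decomposition $\Mat_2(R)=\mathfrak{sl}_2(R)\oplus RI_2$ of Remark~\ref{rem:adjoint-interpretation}, namely $(E_{11}-E_{22},E_{12},E_{21},I_2)$. This is an $R$-basis because $2\in R^\times$: we have $E_{11}=\tfrac12(I_2+(E_{11}-E_{22}))$ and $E_{22}=\tfrac12(I_2-(E_{11}-E_{22}))$. For traceless $X_j$ the last coordinate vanishes. Hence $M(X_1,\ldots,X_r)$ has a zero fourth row, and its nonzero $3\times3$ minors are exactly the $3\times3$ minors of the $3\times r$ matrix $N$ formed by the coordinates of the $X_j$ in the basis $(E_{11}-E_{22},E_{12},E_{21})$ of $\mathfrak{sl}_2(R)\cong R^3$. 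The lemma thus reduces to the standard fact: for $N\in\Mat_{3\times r}(R)$, the columns of $N$ span $R^3$ if and only if the ideal of maximal minors of $N$ is $R$.

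\emph{Forward direction.} If the columns span $R^3$, then there is $B\in\Mat_{r\times3}(R)$ with $NB=I_3$. By Cauchy--Binet, $1=\det(NB)=\sum_S\det(N_S)\det(B^S)$, where $S$ runs over $3$-subsets of $\{1,\ldots,r\}$. Therefore $1$ lies in the ideal of maximal minors. (If $r<3$, the empty sum gives $1=0$, so $R=0$, and the statement holds trivially.)

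\emph{Reverse direction.} This is the main step. I would use Cramer's rule. For each $3$-subset $S$, the adjugate gives $N_S\operatorname{adj}(N_S)=\det(N_S)I_3$. Hence $\det(N_S)e_k$ lies in the column span of $N_S$, which is contained in the span of the $X_j$, for each standard basis vector $e_k$. Choose $a_S\in R$ with $\sum_S a_S\det(N_S)=1$. Then $e_k=\sum_S a_S\det(N_S)e_k$ lies in the span. Thus the span is all of $R^3$, which corresponds to all of $\mathfrak{sl}_2(R)$.

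The only delicate point is the basis bookkeeping: one must check that the chosen basis is legitimate (this uses $2\in R^\times$) and that the zero row contributes nothing, so that the ideal $\mathfrak D_3$ coincides with the maximal-minor ideal of $N$. The rest is routine.
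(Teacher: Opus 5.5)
Your proposal is correct and follows the paper's proof. Both use the adapted basis $(E_{11}-E_{22},E_{12},E_{21},I_2)$ (which needs $2\in R^\times$), note that the $I_2$-row vanishes for traceless matrices, and reduce to the maximal-minor criterion for a $3\times r$ matrix. The paper simply asserts that criterion as standard, whereas you prove it: Cauchy--Binet gives one direction and adjugates give the other.
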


\begin{proof}
Because $2\in R^\times$,
\[
  E_{11}-E_{22},\qquad E_{12},\qquad E_{21},\qquad I_2
\]
form an $R$-basis of $\Mat_2(R)$, and the first three form an $R$-basis of
$\mathfrak{sl}_2(R)$.  Relative to this ordered basis, every
$X_i\in\mathfrak{sl}_2(R)$ has zero $I_2$-coordinate.  Hence
\[
  M(X_1,\ldots,X_r)
  =\begin{pmatrix}B\\0\end{pmatrix},
  \qquad B\in\Mat_{3\times r}(R).
\]
Thus $\mathfrak D_3(X_1,\ldots,X_r)$ is the ideal generated by the maximal
minors of $B$.  These minors generate the unit ideal if and only if the columns
of $B$ generate $R^3$, equivalently if and only if
$X_1,\ldots,X_r$ generate $\mathfrak{sl}_2(R)$.
\end{proof}

\begin{remark}[Fitting-ideal interpretation]
\label{rem:fitting-saturation}
Let $N=\operatorname{span}_R\{X_1,\ldots,X_r\}\subseteq\mathfrak{sl}_2(R)$.
Then
\[
  \mathfrak D_3(X_1,\ldots,X_r)
  =\operatorname{Fitt}_0\bigl(\mathfrak{sl}_2(R)/N\bigr).
\]
Thus the determinantal criterion is equivalently a zeroth-Fitting-ideal test
for the vanishing of the quotient.  This provides a basis-free
commutative-algebraic interpretation of saturation.
\end{remark}

\begin{definition}
Choose $K_1$ as a reference point in the underlying orbit family
$K_1,\ldots,K_m$.  The family is called \emph{affinely saturated} if
\[
  \operatorname{span}_R\{K_i-K_1:i=2,\ldots,m\}
  =\mathfrak{sl}_2(R).
\]
If $\operatorname{tr}(C)=0$, the family is called \emph{linearly saturated} if
\[
  \operatorname{span}_R\{K_1,\ldots,K_m\}
  =\mathfrak{sl}_2(R).
\]
\end{definition}

The choice of $K_1$ in the affine definition is only a choice of reference
point.  For every $j$,
\[
 \operatorname{span}_R\{K_i-K_j:i\ne j\}
 =\operatorname{span}_R\{K_i-K_1:i\ne1\},
\]
because $K_i-K_j=(K_i-K_1)-(K_j-K_1)$.  Affine saturation is therefore a
property of the orbit family itself.  It is the general condition because all
orbit differences are traceless.  When $\operatorname{tr}(C)=0$, the orbit
itself lies in $\mathfrak{sl}_2(R)$, so linear saturation may require one fewer
point.

\begin{proposition}[Determinantal criterion for saturation]
\label{prop:sample-saturation-general}
Let $G_i=FK_i$ be a translated sample as above.
\begin{enumerate}
  \item The underlying orbit family is affinely saturated if and only if
  \[
    \mathfrak D_3(G_2-G_1,\ldots,G_m-G_1)=R.
  \]
  \item If $\operatorname{tr}(C)=0$, the underlying orbit family is linearly
  saturated if and only if
  \[
    \mathfrak D_3(G_1,\ldots,G_m)=R.
  \]
\end{enumerate}
\end{proposition}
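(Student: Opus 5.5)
The plan is to reduce both parts to Lemma~\ref{lem:determinantal-generation} by means of the left-multiplication invariance in Lemma~\ref{lem:determinantal-invariance}. The key observation is that the data matrices are left translates of the underlying orbit data by the single invertible matrix $F$. For part~(1),
\[
  G_i-G_1=FK_i-FK_1=F(K_i-K_1),\qquad i=2,\ldots,m.
\]
Lemma~\ref{lem:determinantal-invariance} then gives
\[
  \mathfrak D_3(G_2-G_1,\ldots,G_m-G_1)=\mathfrak D_3(K_2-K_1,\ldots,K_m-K_1).
\]
Next I check the hypothesis of Lemma~\ref{lem:determinantal-generation}. Each $K_i$ lies in $\Orb(C)$, and similarity preserves trace, so $\operatorname{tr}(K_i)=\tau$ for every $i$. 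Hence $K_i-K_1\in\mathfrak{sl}_2(R)$. Lemma~\ref{lem:determinantal-generation} then says that the right-hand ideal equals $R$ exactly when the differences $K_i-K_1$ span $\mathfrak{sl}_2(R)$. That is precisely the definition of affine saturation.

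For part~(2) the argument is the same without taking differences. I have $\mathfrak D_3(G_1,\ldots,G_m)=\mathfrak D_3(FK_1,\ldots,FK_m)=\mathfrak D_3(K_1,\ldots,K_m)$. When $\operatorname{tr}(C)=0$, every $K_i$ has trace $0$, so $K_i\in\mathfrak{sl}_2(R)$. Lemma~\ref{lem:determinantal-generation} then identifies the condition $\mathfrak D_3=R$ with linear saturation.

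The only point needing care is the degenerate case $m=1$ in part~(1). There the list of differences is empty, so there are no $3\times3$ minors; the ideal is $0$, and the empty span is $0$. I would note that $0\ne R$ and $0\ne\mathfrak{sl}_2(R)$ unless $R=0$, and in the zero ring both sides hold trivially. So the equivalence holds in this case too, and more generally whenever fewer than three columns are present.

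There is no real obstacle here. The content lies in the two lemmas, and the proof only has to observe that $F$ factors out of the differences and that the orbit points have the correct trace.
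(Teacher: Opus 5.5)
Your proposal is correct and is essentially the paper's proof: factor $F$ out of $G_i-G_1=F(K_i-K_1)$ (resp.\ $G_i=FK_i$) using Lemma~\ref{lem:determinantal-invariance}, observe that the relevant matrices are traceless, and apply Lemma~\ref{lem:determinantal-generation}. Your explicit trace check and your remark on fewer than three columns are harmless additions that the paper leaves implicit; the latter is already covered by Lemma~\ref{lem:determinantal-generation}, given the convention that an empty set of minors generates the zero ideal.
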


\begin{proof}
Since
\[
  G_i-G_1=F(K_i-K_1),
\]
Part~(1) follows from Lemmas~\ref{lem:determinantal-invariance} and
\ref{lem:determinantal-generation}.  When $\operatorname{tr}(C)=0$, every
$K_i$ is traceless and $G_i=FK_i$, so the same argument proves Part~(2).
\end{proof}

\subsection{Finite-sample rigidity}

For the given matrices $G_1,\ldots,G_m$, define the \emph{orbit-consistency
set}
\[
  \Cons_C(G_1,\ldots,G_m)
  :=\{U\in\GL_2(R):UG_i\in\Orb(C)\text{ for all }i\},
\]
and the \emph{invariant-consistency set}
\[
  \Inv_C(G_1,\ldots,G_m)
  :=\left\{
    U\in\GL_2(R):
    \begin{array}{l}
      \operatorname{tr}(UG_i)=\tau\quad(i=1,\ldots,m),\ 
      \det(UG_1)=\nu
    \end{array}
  \right\}.
\]
The second set is defined by linear trace equations and one determinant
equation.  Since trace and determinant are constant on $\Orb(C)$,
\[
  \Cons_C(G_1,\ldots,G_m)
  \subseteq
  \Inv_C(G_1,\ldots,G_m).
\]

\begin{theorem}[Finite-sample rigidity for common left multipliers]
\label{thm:finite-sample-rigidity-general}
Assume that $C$ is cyclic and that either
\begin{enumerate}
  \item the underlying orbit family is affinely saturated, or
  \item $\tau=0$ and the underlying orbit family is linearly saturated.
\end{enumerate}
Then
\[
\begin{aligned}
  \Cons_C(G_1,\ldots,G_m)
  &=\Inv_C(G_1,\ldots,G_m)\\
  &=\{tF^{-1}:t\in\Sigma_C(R)\}\\
  &=\{tF^{-1}:t\in R^\times,\ (t-1)\tau=0,\ (t^2-1)\nu=0\}.
\end{aligned}
\]
\end{theorem}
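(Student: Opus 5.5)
The plan is to prove the chain of inclusions
\[
  \{tF^{-1}:t\in\Sigma_C(R)\}\subseteq\Cons_C(G_1,\ldots,G_m)\subseteq\Inv_C(G_1,\ldots,G_m)\subseteq\{tF^{-1}:t\in\Sigma_C(R)\}.
\]
The last equality in the statement is then just Proposition~\ref{prop:scalar-similarity-general}.

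\textbf{First inclusion.} If $t\in\Sigma_C(R)$, then $tF^{-1}G_i=tK_i$. Since $K_i=P_iCP_i^{-1}$, we get $tK_i=P_i(tC)P_i^{-1}$. Because $tC\sim C$, this lies in $\Orb(C)$, so $tF^{-1}\in\Cons_C$.

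\textbf{Middle inclusion.} This was already observed before the statement: trace and determinant are constant on $\Orb(C)$.

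\textbf{Main inclusion: the scalar reduction.} Take $U\in\Inv_C$ and set $H:=UF\in\GL_2(R)$. Then $UG_i=HK_i$, so the trace equations read $\tr(HK_i)=\tau$ for all $i$.

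In case (1), subtract the equation for $i=1$ to get $\tr(H(K_i-K_1))=0$ for $i=2,\dots,m$. By bilinearity, $H$ is trace-orthogonal to $\operatorname{span}_R\{K_i-K_1\}$. Affine saturation says this span is $\mathfrak{sl}_2(R)$.

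In case (2), $\tr(HK_i)=0$ for all $i$. Linear saturation says the $K_i$ span $\mathfrak{sl}_2(R)$, so again $H\perp\mathfrak{sl}_2(R)$.

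In both cases Lemma~\ref{lem:trace-annihilator} gives $H=tI_2$ with $t\in R$, and invertibility of $H$ forces $t\in R^\times$. Hence $U=tF^{-1}$.

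\textbf{Pinning down $t$.} I expect this step to need the most care: we must show $t\in\Sigma_C(R)$ using only the equations in $\Inv_C$. Since $UG_i=tK_i$, the determinant condition gives $t^2\nu=\nu$.

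For the trace condition, in case (2) we have $\tau=0$, so $(t-1)\tau=0$ holds trivially. In case (1), use $\tr(tK_1)=t\tau=\tau$, which is the $i=1$ trace equation. Unlike the differences, this equation is not lost, because $\Inv_C$ includes $i=1$.

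Thus $(t-1)\tau=0$ and $(t^2-1)\nu=0$, and Proposition~\ref{prop:scalar-similarity-general} gives $t\in\Sigma_C(R)$.

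\textbf{Remark on why this is the delicate point.} The orthogonality argument alone only shows $H$ is scalar. The determinant equation on $G_1$, together with the retained trace equation for $i=1$, is exactly what is needed to recover both defining conditions of $\Sigma_C(R)$ over a general ring, where $t$ may be a unit with $t\ne\pm1$.
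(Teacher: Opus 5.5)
Your proposal is correct and follows the paper's proof essentially step for step. You reduce to $H=UF$ being scalar via trace-orthogonality to $\mathfrak{sl}_2(R)$ and Lemma~\ref{lem:trace-annihilator}, then extract $(t-1)\tau=0$ and $(t^2-1)\nu=0$ from the retained trace equations and the determinant equation. You close the chain with the inclusion $t\,\Orb(C)=\Orb(C)$ and $\Cons_C\subseteq\Inv_C$.
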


\begin{proof}
Let $U\in\Inv_C(G_1,\ldots,G_m)$ and define $A:=UF$.  Since $G_i=FK_i$,
the trace equations become
\[
  \operatorname{tr}(AK_i)=\tau
  \qquad(i=1,\ldots,m).
\]
If the underlying orbit family is affinely saturated, subtracting the equation
for the reference point $K_1$ from the equation for $K_i$ gives
\[
  \operatorname{tr}\bigl(A(K_i-K_1)\bigr)=0.
\]
Affine saturation and Lemma~\ref{lem:trace-annihilator} imply $A=tI_2$ for
some $t\in R$.  If instead $\tau=0$ and the underlying orbit family is
linearly saturated, then $\operatorname{tr}(AK_i)=0$ for all $i$, and the same
lemma again gives $A=tI_2$.

Since $U$ and $F$ are invertible, $t\in R^\times$, and hence $U=tF^{-1}$.
The trace equations give $(t-1)\tau=0$, while the determinant normalization
gives $(t^2-1)\nu=0$.  Proposition~\ref{prop:scalar-similarity-general}
therefore yields $t\in\Sigma_C(R)$.  Hence
\[
  \Inv_C(G_1,\ldots,G_m)
  \subseteq\{tF^{-1}:t\in\Sigma_C(R)\}.
\]

Conversely, let $t\in\Sigma_C(R)$ and define $U:=tF^{-1}$.  Then $UG_i=tK_i$.
Since $tC\sim C$, scalar multiplication by $t$ preserves the whole similarity
orbit: $t\Orb(C)=\Orb(C)$.  Thus $U\in\Cons_C(G_1,\ldots,G_m)$.
Together with $\Cons_C\subseteq\Inv_C$, this proves the result.
\end{proof}

By Proposition~\ref{prop:sample-saturation-general}, both saturation
hypotheses in Theorem~\ref{thm:finite-sample-rigidity-general} are directly
checkable from the given matrices $G_i$.

\begin{corollary}[Finite samples attain the complete-orbit ambiguity]
\label{cor:finite-samples-complete-orbit}
Under the hypotheses of Theorem~\ref{thm:finite-sample-rigidity-general},
\[
  \{U^{-1}:U\in\Cons_C(G_1,\ldots,G_m)\}
  =F\Gamma_C(R).
\]
Consequently, a saturated finite translated sample determines exactly the same
multiplier class as the complete translated orbit $F\Orb(C)$.
\end{corollary}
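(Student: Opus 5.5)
The plan is to read this corollary off by combining Theorem~\ref{thm:finite-sample-rigidity-general} with Theorem~\ref{thm:cyclic-orbit-rigidity}. Under either saturation hypothesis, Theorem~\ref{thm:finite-sample-rigidity-general} gives
\[
  \Cons_C(G_1,\ldots,G_m)=\{tF^{-1}:t\in\Sigma_C(R)\}.
\]
Inverting elementwise, I will use that $t$ is a central unit to write $(tF^{-1})^{-1}=t^{-1}F=F\,(t^{-1}I_2)$. Since $\Sigma_C(R)$ is a subgroup of $R^\times$, as noted right after its definition, the map $t\mapsto t^{-1}$ is a bijection of $\Sigma_C(R)$ onto itself. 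Hence
\[
  \{U^{-1}:U\in\Cons_C(G_1,\ldots,G_m)\}=\{F\,sI_2:s\in\Sigma_C(R)\}=F\bigl(\Sigma_C(R)I_2\bigr).
\]
Theorem~\ref{thm:cyclic-orbit-rigidity} identifies $\Sigma_C(R)I_2$ with $\Gamma_C(R)$, which yields the displayed equality $F\Gamma_C(R)$.

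For the ``consequently'' clause, I will invoke Proposition~\ref{prop:translated-supports-general}. It says that the set of $F'\in\GL_2(R)$ with $F'\Orb(C)=F\Orb(C)$ is exactly the coset $F\Gamma_C(R)$. So the set of multipliers compatible with the finite sample coincides with the set of multipliers compatible with the complete translated orbit. Note that $\{U^{-1}:U\in\Cons_C\}$ is precisely the set of $F'\in\GL_2(R)$ with $F'^{-1}G_i\in\Orb(C)$ for all $i$, that is, those $F'$ admitting a representation $G_i=F'K_i'$ with $K_i'\in\Orb(C)$.

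There is no real obstacle here. The only points needing care are the group property of $\Sigma_C(R)$, used to absorb the inversion, and the centrality of scalar matrices, used to move $t^{-1}$ past $F$.
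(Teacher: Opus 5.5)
Your proposal is correct and follows essentially the same route as the paper. You invert the description $\Cons_C=\{tF^{-1}:t\in\Sigma_C(R)\}$ from Theorem~\ref{thm:finite-sample-rigidity-general}, absorb $t\mapsto t^{-1}$ using the group property of $\Sigma_C(R)$, identify $\Sigma_C(R)I_2=\Gamma_C(R)$ via Theorem~\ref{thm:cyclic-orbit-rigidity}, and then apply Proposition~\ref{prop:translated-supports-general} for the final clause. Your added remark that $\{U^{-1}:U\in\Cons_C\}$ is exactly the set of multipliers $F'$ admitting a representation $G_i=F'K_i'$ is a correct and helpful clarification of what ``the multiplier class determined by the sample'' means.
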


\begin{proof}
By Theorem~\ref{thm:finite-sample-rigidity-general}, the elements of
$\Cons_C(G_1,\ldots,G_m)$ are $tF^{-1}$ with $t\in\Sigma_C(R)$.
Their inverses are $Ft^{-1}I_2$.  Since
$\Sigma_C(R)$ is a group and $\Gamma_C(R)=\Sigma_C(R)I_2$ by
Theorem~\ref{thm:cyclic-orbit-rigidity}, these inverses form precisely
$F\Gamma_C(R)$.  Proposition~\ref{prop:translated-supports-general} identifies
this coset with the complete-orbit ambiguity.
\end{proof}

% =====================================================================
\section{Off-diagonal quadratic operators}
\label{sec:offdiag}
% =====================================================================

We now construct a module-theoretic source of the translated similarity-orbit
problem.  Let $R$ be a commutative ring, and let $V$ and $W$ be free
$R$-modules of rank two.  The decomposition $V\oplus W$ is part of the fixed
data, and bases of $V$ and $W$ are chosen only when matrix coordinates are
needed.

Fix scalars $\alpha,\gamma,\delta\in R$ and an isomorphism
$F:V\to W$.  Associated with $F$ is the block operator
\[
  T_F=
  \begin{pmatrix}
    \alpha I_V & F^{-1}\\
    \gamma F   & \delta I_W
  \end{pmatrix}
  \in\operatorname{End}_R(V\oplus W).
  \tag{4.1}
\]

\subsection{Normalization and the quadratic algebra}

The form in (4.1) is a normalization of a more general scalar off-diagonal
operator.  Indeed, for $a,c,d\in R$ and $b\in R^\times$,
\[
  \begin{pmatrix}
    aI_V & bF^{-1}\\
    cF   & dI_W
  \end{pmatrix}
\]
takes the form (4.1) after replacing $F$ by $b^{-1}F$ and defining
\[
  \alpha:=a,\qquad \gamma:=bc,\qquad \delta:=d.
\]
Thus (4.1) is a normalization whenever the upper-right scalar coefficient
is a unit.

A direct block multiplication gives
\[
  T_F^2-(\alpha+\delta)T_F
  +(\alpha\delta-\gamma)I_{V\oplus W}=0.
  \tag{4.2}
\]
Accordingly, let
\[
  A_{\alpha,\gamma,\delta}
  :=R[\theta]/
  \bigl(\theta^2-(\alpha+\delta)\theta+\alpha\delta-\gamma\bigr),
  \tag{4.3}
\]
with the residue class of $\theta$ distinguished.  Equation (4.2) gives an
$R$-algebra homomorphism
\[
  \iota_F:A_{\alpha,\gamma,\delta}
  \longrightarrow\operatorname{End}_R(V\oplus W),
  \qquad \theta\longmapsto T_F.
\]
This homomorphism is injective: if
$rI_{V\oplus W}+sT_F=0$, the upper-right block gives $sF^{-1}=0$, hence
$s=0$ and then $r=0$.  Moreover, $\iota_F=\iota_{F'}$ if and only if
$F=F'$, since equality of the distinguished generators gives
$T_F=T_{F'}$ and hence $F^{-1}=F'^{-1}$.  Thus the marked quadratic embedding
retains the off-diagonal isomorphism $F$, even though the ambient endomorphism
algebra is generally noncommutative.

\subsection{The graph companion}

The parameters determine the graph companion
\[
  C_{\alpha,\gamma,\delta}
  :=\begin{pmatrix}
      0&\gamma\\
      1&\delta-\alpha
    \end{pmatrix},
  \qquad
  C:=C_{\alpha,\gamma,\delta}.
  \tag{4.4}
\]
For this $C$,
\[
  \tau=\delta-\alpha,\qquad
  \nu=-\gamma,\qquad
  \Delta=(\delta-\alpha)^2+4\gamma.
\]
Its characteristic polynomial is
\[
  \chi_C(Z)=Z^2-\tau Z+\nu=Z^2-\tau Z-\gamma.
\]
Moreover,
\[
  C\binom{1}{0}=\binom{0}{1},
\]
so $C$ is cyclic over every coefficient ring.  Consequently, whenever
$2\in R^\times$, the rigidity theorems of
Sections~\ref{sec:cyclic-orbits} and~\ref{sec:common-multipliers} apply to
(4.4) with no further regularity assumption.

The defining polynomial in (4.3) has discriminant
\[
  (\alpha+\delta)^2-4(\alpha\delta-\gamma)
  =(\delta-\alpha)^2+4\gamma
  =\Delta.
\]
Thus the marked quadratic algebra and the graph companion carry the same
quadratic discriminant.

% =====================================================================
\section{Transverse modules and exact orbit realization}
\label{sec:transverse-realization}
% =====================================================================

The translated orbit $F\Orb(C)$ arises from two-generated submodules
attached directly to $T_F$.  For $x\in V\oplus W$, define
\[
  L_x:=\langle x,T_Fx\rangle_R\subseteq V\oplus W.
\]
The submodule $L_x$, rather than an ordered pair of generators, is the object
from which the graph map will be extracted.

\subsection{Transverse graph modules}

Let $\pi_V:V\oplus W\to V$ and $\pi_W:V\oplus W\to W$ be the two
projections.

\begin{definition}
A submodule $L\subseteq V\oplus W$ is \emph{$V$-transverse} if
$\pi_V|_L:L\to V$ is an isomorphism.  A vector $x$ is called
\emph{admissible} if $L_x$ is $V$-transverse.
\end{definition}

Every $V$-transverse submodule is the graph of the unique map
\[
  G_L=\pi_W|_L\circ(\pi_V|_L)^{-1}:V\longrightarrow W,
  \qquad
  L=\left\{\binom{v}{G_Lv}:v\in V\right\}.
\]

Because $F$ is an isomorphism, every $x\in V\oplus W$ can be written uniquely
as
\[
  x=\binom{r}{Fz},\qquad r,z\in V.
\]
Writing $T_Fx=(r',s')^{\top}$ gives
\[
  r'=\alpha r+z,
  \qquad
  s'=F(\gamma r+\delta z).
\]
Using fixed bases, define
\[
  P:=(r\;z),\qquad X:=(r\;r'),\qquad Y:=(Fz\;s').
\]
Then
\[
  X=P\begin{pmatrix}1&\alpha\\0&1\end{pmatrix},
  \qquad
  Y=FP\begin{pmatrix}0&\gamma\\1&\delta\end{pmatrix}.
  \tag{5.1}
\]

\begin{proposition}[Admissibility criterion]
\label{prop:admissibility}
For $x$ as above, the following are equivalent:
\begin{enumerate}
  \item $x$ is admissible;
  \item $X\in\GL_2(R)$;
  \item $P\in\GL_2(R)$;
  \item $r,z$ form an $R$-basis of $V$.
\end{enumerate}
\end{proposition}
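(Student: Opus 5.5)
The plan is to prove the cycle of implications (2)$\Leftrightarrow$(3)$\Leftrightarrow$(4) first, since these are purely matrix-theoretic consequences of the factorization (5.1), and then to connect (1) with (2) by unwinding the definition of $V$-transversality for $L_x=\langle x,T_Fx\rangle_R$.

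For (2)$\Leftrightarrow$(3), use $X=P\begin{pmatrix}1&\alpha\\0&1\end{pmatrix}$ from (5.1). The right factor is unipotent, hence lies in $\GL_2(R)$, so $X$ is invertible exactly when $P$ is. For (3)$\Leftrightarrow$(4), observe that $P=(r\;z)$ is the coordinate matrix of the pair $(r,z)$ in the fixed basis of $V$. Over a commutative ring, a square coordinate matrix is invertible if and only if its columns form a basis. Both steps are formal.

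The substantive step is (1)$\Leftrightarrow$(2). Note that $\pi_V(L_x)$ is the submodule of $V$ generated by $\pi_V x=r$ and $\pi_V T_Fx=r'$, that is, by the columns of $X$.

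For (2)$\Rightarrow$(1), suppose $X\in\GL_2(R)$. Then $r,r'$ form a basis of $V$, so $\pi_V|_{L_x}$ is surjective. For injectivity, take an element $ax+bT_Fx\in L_x$ with $\pi_V$-image $ar+br'=0$. Linear independence of $r,r'$ forces $a=b=0$, so the element itself is zero. Hence $\pi_V|_{L_x}$ is an isomorphism.

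For (1)$\Rightarrow$(2), suppose $\pi_V|_{L_x}$ is an isomorphism. Then $L_x\cong V$ is free of rank two and generated by the two elements $x,T_Fx$. The surjection $R^2\to L_x\cong R^2$ sending the standard basis to $(x,T_Fx)$ is therefore a surjective endomorphism of a finitely generated module, hence bijective by Vasconcelos' theorem (or, more elementarily here, by taking determinants of a matrix $M$ with $NM=I$). So $x,T_Fx$ is a basis of $L_x$. Applying the isomorphism $\pi_V|_{L_x}$ carries this basis to $r,r'$, which must then be a basis of $V$, giving $X\in\GL_2(R)$.

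I expect the only point needing care to be this last direction. The issue is justifying that two generators of a free rank-two module form a basis without any field hypothesis. I would handle it with the surjective-endomorphism argument: a surjection $R^2\to R^2$ splits, so there is $N$ with $MN=I_2$; then $\det M\det N=1$, so $M$ is invertible and the map is also injective.
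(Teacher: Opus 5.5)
Your proof is correct and follows the paper's argument: (2)$\Leftrightarrow$(3) comes from the unipotent factor in (5.1), (3)$\Leftrightarrow$(4) from reading $P$ as a coordinate matrix, and (1)$\Leftrightarrow$(2) from projecting relations and using that a surjective square matrix over a commutative ring is invertible. Your detour through ``$x,T_Fx$ is a basis of $L_x$'' in (1)$\Rightarrow$(2) is harmless but unnecessary: the composite $R^2\to L_x\to V$ is just $X$, so the paper simply notes that the projections $r,r'$ generate $V$, making $X$ surjective and hence invertible.
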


\begin{proof}
If $X$ is invertible, the first projections of $x$ and $T_Fx$ form a basis of
$V$.  Any relation between the two generators then projects to a trivial
relation, so they form a basis of $L_x$ and $\pi_V|_{L_x}$ is an isomorphism.
Conversely, if $\pi_V|_{L_x}$ is an isomorphism, the first projections generate
$V$, so the square matrix $X$ is surjective and hence invertible.  By (5.1),
$X$ is invertible exactly when $P$ is.  Finally, $P=(r\;z)$ is invertible
exactly when $r,z$ form an $R$-basis of $V$.
\end{proof}

\subsection{Exact realization of a translated similarity orbit}

\begin{theorem}[Exact orbit factorization]
\label{thm:orbit-factorization}
For every admissible $x$,
\[
  G_{L_x}=FPCP^{-1},
\]
where $P=(r\;z)\in\GL_2(R)$ and $C$ is the graph companion (4.4).
\end{theorem}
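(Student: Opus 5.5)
Since $L_x=\langle x,T_Fx\rangle_R$ and $x$ is admissible, Proposition~\ref{prop:admissibility} shows that $x,T_Fx$ form a basis of $L_x$ whose first projections are the columns of $X\in\GL_2(R)$. The plan is to read off the graph map directly from this basis. The second projections are the columns of $Y$. Because $G_{L_x}$ sends the first projection of each generator to its second projection, we get the matrix identity
\[
  G_{L_x}X=Y,\qquad\text{hence}\qquad G_{L_x}=YX^{-1}.
\]
Uniqueness of the graph map ensures this is independent of the chosen generators. It therefore suffices to compute $YX^{-1}$ using (5.1).

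Substituting (5.1) gives
\[
  YX^{-1}
  =FP\begin{pmatrix}0&\gamma\\1&\delta\end{pmatrix}
   \begin{pmatrix}1&\alpha\\0&1\end{pmatrix}^{-1}P^{-1}
  =FP\begin{pmatrix}0&\gamma\\1&\delta\end{pmatrix}
   \begin{pmatrix}1&-\alpha\\0&1\end{pmatrix}P^{-1},
\]
where $P\in\GL_2(R)$ by Proposition~\ref{prop:admissibility}. The remaining step is the $2\times2$ product
\[
  \begin{pmatrix}0&\gamma\\1&\delta\end{pmatrix}
  \begin{pmatrix}1&-\alpha\\0&1\end{pmatrix}
  =\begin{pmatrix}0&\gamma\\1&\delta-\alpha\end{pmatrix},
\]
which is exactly the graph companion $C=C_{\alpha,\gamma,\delta}$ of (4.4). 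This yields $G_{L_x}=FPCP^{-1}$.

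The only point needing care, and the expected obstacle, is verifying (5.1) itself with consistent coordinates. Concretely, $r'=\alpha r+z$ gives the second column of $X=P\bigl(\begin{smallmatrix}1&\alpha\\0&1\end{smallmatrix}\bigr)$. Likewise $s'=F(\gamma r+\delta z)$ and $Fz=F(0\cdot r+1\cdot z)$ give $Y=FP\bigl(\begin{smallmatrix}0&\gamma\\1&\delta\end{smallmatrix}\bigr)$, with $F$ written in the fixed bases of $V$ and $W$. With (5.1) in hand, the rest is the direct computation above, and no hypothesis on $2$ is needed.
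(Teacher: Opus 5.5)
Your proposal is correct and follows essentially the same route as the paper. The paper also obtains $G_{L_x}=YX^{-1}$ from (5.1) and the admissibility criterion, then multiplies out the companion-type product to get $C$. You add explicit justification of $G_{L_x}X=Y$ and of (5.1) itself, which the paper leaves implicit.
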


\begin{proof}
For an admissible $x$, (5.1) gives $G_{L_x}=YX^{-1}$.  Therefore
\[
\begin{aligned}
G_{L_x}
=FP
  \begin{pmatrix}0&\gamma\\1&\delta\end{pmatrix}
  \begin{pmatrix}1&\alpha\\0&1\end{pmatrix}^{-1}
  P^{-1}
=FP
  \begin{pmatrix}0&\gamma\\1&\delta-\alpha\end{pmatrix}
  P^{-1}
 =FPCP^{-1}.
\end{aligned}
\]
\end{proof}

\begin{corollary}[Exact support]
\label{cor:exact-support}
As $x$ varies over the admissible vectors, the set of graph maps is exactly
\[
  F\Orb(C).
\]
Consequently, if $2\in R^\times$, the complete family of transverse graph
modules determines precisely the class $F\Gamma_C(R)$ from
Theorem~\ref{thm:cyclic-orbit-rigidity}.
\end{corollary}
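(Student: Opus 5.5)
The statement has two parts. The first is the set equality between the admissible graph maps and $F\Orb(C)$. The second is the identification of the multiplier class, which should follow from results already proved.

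For the inclusion of the graph maps in $F\Orb(C)$, take an admissible $x$. Proposition~\ref{prop:admissibility} gives $P=(r\;z)\in\GL_2(R)$. Theorem~\ref{thm:orbit-factorization} then gives $G_{L_x}=FPCP^{-1}$, which lies in $F\Orb(C)$.

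For the reverse inclusion, start from an arbitrary element $FPCP^{-1}$ of $F\Orb(C)$ with $P\in\GL_2(R)$. Let the columns of $P$ be $r,z\in V$ in the fixed basis, and set $x:=\binom{r}{Fz}$. This $x$ is a legitimate element of $V\oplus W$ whose decomposition $x=\binom{r}{Fz}$ is the unique one. The associated matrix $(r\;z)$ is exactly $P$, which is invertible, so $x$ is admissible by Proposition~\ref{prop:admissibility}. Theorem~\ref{thm:orbit-factorization} then gives $G_{L_x}=FPCP^{-1}$.

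The only point to check is that the assignment $x\mapsto P$ hits every $P\in\GL_2(R)$. This holds because $F$ is an isomorphism and every pair $(r,z)\in V\times V$ arises from some $x$. This is the main, though mild, obstacle.

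For the consequence, assume $2\in R^\times$. The graph $C$ in (4.4) is cyclic, as noted after (4.4), so Theorem~\ref{thm:cyclic-orbit-rigidity} applies. The family of transverse graph modules $L_x$ determines the set of graph maps $\{G_{L_x}\}$, since each $V$-transverse $L$ determines $G_L$ uniquely. By the first part, this set is exactly the support $F\Orb(C)$. Proposition~\ref{prop:translated-supports-general} then shows that this support determines precisely the coset $F\Gamma_C(R)$. Theorem~\ref{thm:cyclic-orbit-rigidity} makes this coset explicit as $F\Gamma_C(R)=\{tF:t\in\Sigma_C(R)\}$ with $\tau=\delta-\alpha$ and $\nu=-\gamma$.
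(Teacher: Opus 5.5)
Your proposal is correct and follows the paper's own proof. As in the paper, you get one inclusion from Theorem~\ref{thm:orbit-factorization}, obtain the reverse inclusion by taking $x=\binom{r}{Fz}$ from the columns of a given $P\in\GL_2(R)$ and invoking Proposition~\ref{prop:admissibility}, and deduce the final assertion from Proposition~\ref{prop:translated-supports-general}.
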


\begin{proof}
Theorem~\ref{thm:orbit-factorization} gives one inclusion.  Conversely, let
$P=(r\;z)\in\GL_2(R)$ and choose $x=(r,Fz)^{\top}$.  Then $x$ is
admissible by Proposition~\ref{prop:admissibility} and has graph map
$FPCP^{-1}$.  The final assertion follows from
Proposition~\ref{prop:translated-supports-general}.
\end{proof}

\begin{remark}[Affine change of the quadratic generator]
\label{rem:affine-generator-change}
For every $\mu\in R$ and $\lambda\in R^\times$,
\[
  \langle x,(\mu I+\lambda T_F)x\rangle_R
  =\langle x,T_Fx\rangle_R=L_x.
\]
Indeed, $(\mu I+\lambda T_F)x$ lies in $L_x$, while
$T_Fx=\lambda^{-1}((\mu I+\lambda T_F)x-\mu x)$.  Thus the transverse module and its
graph map are unchanged by an affine change of the distinguished quadratic
generator.  The orbit realization depends on the resulting two-generated
module, not on this choice of generator.
\end{remark}

The graph map is also independent of the choice of basis of a transverse
module.  If $\binom{X}{Y}$ is the block matrix of any basis of $L$, then
$G_L=YX^{-1}$; replacing the basis by $(XQ,YQ)$ leaves this expression
unchanged.  Hence, once the graph maps satisfy the determinantal criterion of
Proposition~\ref{prop:sample-saturation-general}, the corresponding finite
family reaches the same class $F\Gamma_C(R)$ as the complete translated orbit.

\subsection{Natural sampling over finite fields}

For the quantitative results in Section~\ref{sec:finite-geometry}, the
translated-orbit distribution arises naturally from uniform vectors.

\begin{proposition}[Uniform orbit sampling from uniform vectors]
\label{prop:natural-sampling}
Assume $R=\mathbb F_q$, where $q$ is a prime power.  Sample $x$ uniformly
from $V\oplus W$, write
\[
  x=\binom{r}{Fz},\qquad P:=(r\;z).
\]
Then $P$ is uniform in $\Mat_2(\mathbb F_q)$, $x$ is admissible if and only if
$P\in\GL_2(\mathbb F_q)$, and
\[
  \Pr[x\text{ is admissible}]
  =(1-q^{-1})(1-q^{-2}).
  \tag{5.2}
\]
Conditioned on admissibility, the graph map is uniform on $F\Orb(C)$.
\end{proposition}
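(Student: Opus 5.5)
The plan is to fix the bases of $V$ and $W$ once and for all, which identifies $V\oplus W$ with $\mathbb F_q^2\oplus\mathbb F_q^2$. We then parametrize a uniform $x$ through the map $(r,z)\mapsto \binom{r}{Fz}$. Since $F$ is an isomorphism, $x\mapsto (r,z)$ is a bijection $V\oplus W\to V\times V$. Hence $(r,z)$ is uniform on $V\times V$, and $P=(r\;z)$ is uniform on $\Mat_2(\mathbb F_q)$.

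The admissibility equivalence is exactly Proposition~\ref{prop:admissibility}, items (1)$\Leftrightarrow$(3). The probability (5.2) then follows from the standard count
\[
|\GL_2(\mathbb F_q)|=(q^2-1)(q^2-q)=q^4(1-q^{-1})(1-q^{-2}),
\]
divided by $|\Mat_2(\mathbb F_q)|=q^4$.

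For the conditional distribution, I would condition on $P\in\GL_2(\mathbb F_q)$, so that $P$ is uniform on $\GL_2(\mathbb F_q)$. By Theorem~\ref{thm:orbit-factorization}, the graph map equals $\phi(P):=FPCP^{-1}$. The map $\phi$ is the composition of the orbit map $P\mapsto PCP^{-1}$, from $\GL_2(\mathbb F_q)$ onto $\Orb(C)$, with the bijection $K\mapsto FK$ onto $F\Orb(C)$. For each $K=P_0CP_0^{-1}\in\Orb(C)$, the fibre of the orbit map is the coset $P_0\,Z(C)$, where $Z(C)=\{Q\in\GL_2(\mathbb F_q):QC=CQ\}$ is the centralizer. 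All fibres therefore have the common size $|Z(C)|$. The pushforward of the uniform measure on $\GL_2(\mathbb F_q)$ is thus uniform on $\Orb(C)$, and hence uniform on $F\Orb(C)$.

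The only step needing care is this fibre computation, namely checking that $PCP^{-1}=P'CP'^{-1}$ if and only if $P^{-1}P'\in Z(C)$. This is immediate. No obstacle is expected, because cyclicity of $C$ is not even needed here: equal fibre sizes hold for any orbit of a group action.
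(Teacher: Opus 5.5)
Your proposal is correct and follows the paper's proof essentially step for step. Both use the bijection $x\mapsto(r,z)$ to get $P$ uniform, invoke Proposition~\ref{prop:admissibility} together with $|\GL_2(\mathbb F_q)|=(q^2-1)(q^2-q)$, and show uniformity on $F\Orb(C)$ via the fact that the fibres of $P\mapsto PCP^{-1}$ are cosets of the centralizer of $C$.
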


\begin{proof}
Since $F$ is bijective, uniform $x$ gives independent uniform $r,z\in V$, so
$P$ is uniform in $\Mat_2(\mathbb F_q)$.  Proposition~\ref{prop:admissibility}
identifies admissibility with invertibility of $P$, and
\[
  |\GL_2(\mathbb F_q)|=(q^2-1)(q^2-q)
\]
gives (5.2).  Conditioned on invertibility, $P$ is uniform in
$\GL_2(\mathbb F_q)$.  The fibers of $P\mapsto PCP^{-1}$ are cosets of the
centralizer of $C$, so the induced orbit point is uniform.
\end{proof}

% =====================================================================
\section{Field specializations and sharp finite-sample rigidity}
\label{sec:field-specialization}
% =====================================================================

Let $k$ be a field of characteristic different from $2$, and let
$C\in\Mat_2(k)$ be cyclic.  The general stabilizer formula becomes completely
explicit over a field.

\begin{corollary}[Field stabilizer classification]
\label{cor:field-stabilizer}
One has
\[
  \Gamma_C(k)=
  \begin{cases}
    \{I_2\},&\tau\neq0,\\[1mm]
    \{\pm I_2\},&\tau=0,\ \nu\neq0,\\[1mm]
    k^\times I_2,&\tau=\nu=0.
  \end{cases}
  \tag{6.1}
\]
In the last case $C$ is necessarily a nonzero nilpotent matrix.
\end{corollary}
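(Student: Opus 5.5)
The plan is to specialize Theorem~\ref{thm:cyclic-orbit-rigidity} to $R=k$. Since $\operatorname{char}k\neq2$, we have $2\in k^\times$, so the theorem applies and gives
\[
  \Gamma_C(k)=\{tI_2: t\in k^\times,\ (t-1)\tau=0,\ (t^2-1)\nu=0\}.
\]
Over a field, each condition either forces the scalar factor to vanish or forces a polynomial identity in $t$. I will split into the three cases of (6.1).

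\emph{Case $\tau\neq0$.} The condition $(t-1)\tau=0$ gives $t=1$. Then the second condition holds automatically, so $\Gamma_C(k)=\{I_2\}$.

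\emph{Case $\tau=0$, $\nu\neq0$.} The first condition is vacuous. The second gives $t^2=1$, so $(t-1)(t+1)=0$ and $t=\pm1$. Both values are units, and they are distinct because $\operatorname{char}k\neq2$. Hence $\Gamma_C(k)=\{\pm I_2\}$.

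\emph{Case $\tau=\nu=0$.} Both conditions are vacuous, so every $t\in k^\times$ qualifies and $\Gamma_C(k)=k^\times I_2$.

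For the final assertion, note that when $\tau=\nu=0$ the Cayley--Hamilton identity reads $C^2=\tau C-\nu I_2=0$, so $C$ is nilpotent. Moreover $C\neq0$: the zero matrix is not cyclic, since $(v,0)$ is never a basis of $k^2$ over a nonzero ring. So $C$ is a nonzero nilpotent matrix.

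There is no genuine obstacle here. The only points that need care are that $\pm1$ are distinct, which uses $\operatorname{char}k\neq 2$, and that cyclicity excludes $C=0$.
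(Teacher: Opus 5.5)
Your proposal is correct and follows essentially the same route as the paper: specialize Theorem~\ref{thm:cyclic-orbit-rigidity} to $R=k$ and read off the three cases from $(t-1)\tau=0$ and $(t^2-1)\nu=0$. Your Cayley--Hamilton argument and the observation that the zero matrix is not cyclic simply spell out the paper's one-line justification that a cyclic matrix with characteristic polynomial $Z^2$ is nonzero nilpotent.
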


\begin{proof}
Apply Theorem~\ref{thm:cyclic-orbit-rigidity}.  If $\tau\neq0$, the relation
$(t-1)\tau=0$ gives $t=1$.  If $\tau=0$ and $\nu\neq0$, the determinant relation
gives $t^2=1$, hence $t=\pm1$.  If $\tau=\nu=0$, both scalar conditions are
vacuous.  A cyclic $2\times2$ matrix with characteristic polynomial $Z^2$ is
nonzero nilpotent.
\end{proof}

The discriminant $\Delta$ determines the ordinary conjugacy type.  If
$\Delta\neq0$, then $C$ is regular semisimple; it is \emph{split} when its
characteristic polynomial splits over $k$, equivalently when $\Delta$ is a
square in $k$, and \emph{nonsplit} otherwise.  If $\Delta=0$, then $C$ is a
nontrivial Jordan matrix.  The intrinsic left ambiguity, however, depends only
on the simpler alternatives in (6.1).

\subsection{Sharp finite-sample rigidity}

The general finite-sample theorem immediately gives the sharp sufficient
sample sizes over fields.  Here ``threshold'' means a uniform determination
bound conditional on the corresponding directly checkable saturation
criterion; arbitrary families of that cardinality need not be saturated.

\begin{corollary}[Field rigidity thresholds]
\label{cor:field-thresholds}
Let $G_i=FK_i$ be a translated sample with underlying orbit points
$K_i\in\Orb(C)$.
\begin{enumerate}
  \item If $\tau\neq0$, four translated matrices whose underlying orbit family
  is affinely saturated determine $F$ uniquely.
  In fact the trace equations
  \[
    \operatorname{tr}(UG_i)=\tau
  \]
  alone have the unique solution $U=F^{-1}$.

  \item If $\tau=0$ and $\nu\neq0$, three translated matrices whose underlying
  orbit family is linearly saturated determine the line $kF^{-1}$ from the
  homogeneous trace equations.  Imposing
  \[
    \det(UG_1)=\nu
  \]
  leaves exactly $U=\pm F^{-1}$.

  \item If $\tau=\nu=0$, three translated matrices whose underlying orbit family
  is linearly saturated determine exactly the line $kF^{-1}$, and therefore
  determine the projective class of $F$.
\end{enumerate}
\end{corollary}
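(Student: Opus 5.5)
The plan is to specialize Theorem~\ref{thm:finite-sample-rigidity-general} and Corollary~\ref{cor:field-stabilizer} to $R=k$, while keeping track of which equations are used in each case. The common first step is the following. Take $U$ satisfying the relevant trace equations and set $A:=UF$. Then the equations become $\operatorname{tr}(AK_i)=\tau$, and the argument in the proof of Theorem~\ref{thm:finite-sample-rigidity-general} applies verbatim. It uses affine saturation in case (1) and linear saturation in cases (2) and (3). Together with Lemma~\ref{lem:trace-annihilator}, it forces $A=tI_2$ for some $t\in k$, so $U=tF^{-1}$. The cardinalities four and three are the minimum compatible with saturation, since $\dim_k\mathfrak{sl}_2(k)=3$. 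Affine saturation needs three differences $K_i-K_1$, hence four points, and linear saturation needs three points. Beyond noting this count, no further work is needed for the sufficiency statements.

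For case (1), I will not invoke the determinant condition. The trace equation $\operatorname{tr}(tF^{-1}FK_1)=t\tau=\tau$ with $\tau\neq0$ gives $t=1$, so $U=F^{-1}$ is the unique solution of the trace equations alone. This also recovers $F$ uniquely, consistent with $\Gamma_C(k)=\{I_2\}$.

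For cases (2) and (3), the trace equations are homogeneous, $\operatorname{tr}(UG_i)=0$. Their solution set in $\Mat_2(k)$ is a linear subspace. Linear saturation and the first step show that this subspace lies in $kF^{-1}$. Conversely, every $tF^{-1}$ satisfies the equations because the $K_i$ are traceless, so the solution set is exactly the line $kF^{-1}$, and its nonzero points determine $F$ up to $k^\times$. In case (2), imposing $\det(UG_1)=\nu$ yields $t^2\nu=\nu$ with $\nu\neq0$, hence $t=\pm1$, and both signs occur since $-C\sim C$ by Proposition~\ref{prop:scalar-similarity-general}. In case (3), $\nu=0$ makes the determinant condition vacuous. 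Theorem~\ref{thm:finite-sample-rigidity-general} then gives $\Cons_C=k^\times F^{-1}$, so only the projective class of $F$ is determined, matching Corollary~\ref{cor:field-stabilizer}.

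The only point needing care is the homogeneity in cases (2) and (3). There one should state the conclusion as determining the line rather than a single $U$, and not restrict to $\GL_2$ prematurely. Beyond that the argument is routine. The sharpness claims, namely that fewer points cannot be saturated and that saturated families of these sizes exist, are handled by the dimension count above and the constructions deferred to Appendix~A.
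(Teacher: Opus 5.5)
Your proposal is correct and is essentially the paper's proof: with $A=UF$, the argument of Theorem~\ref{thm:finite-sample-rigidity-general} uses only the trace equations, without assuming $U$ invertible, to force $U=tF^{-1}$, and the three cases then follow from $t\tau=\tau$ and, where applicable, $t^2\nu=\nu$. One aside in your last paragraph is slightly off, though it lies outside this corollary: Appendix~\ref{app:sharpness} constructs ambiguous undersized samples, which is the content of Theorem~\ref{thm:field-sharpness}, whereas the existence of saturated families of sizes four and three follows from Theorem~\ref{thm:orbit-difference-generation}.
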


\begin{proof}
Under affine or linear saturation, the proof of
Theorem~\ref{thm:finite-sample-rigidity-general} shows first that every trace
solution has the form $U=tF^{-1}$.  The three conclusions then follow from
$t\tau=\tau$ and, where applicable, $t^2\nu=\nu$.
\end{proof}

Affine saturation requires at least four orbit points, since three independent
differences are needed to generate the three-dimensional space
$\mathfrak{sl}_2(k)$.  Linear saturation requires at least three orbit points.
The next theorem shows that these dimension bounds are genuine uniform bounds,
not artifacts of the proof.

\begin{theorem}[Sharpness of the field thresholds]
\label{thm:field-sharpness}
Over every field of characteristic different from $2$, no uniform rigidity
theorem can replace the sample counts in
Corollary~\ref{cor:field-thresholds} by smaller ones.  More precisely,
there are two-point translated samples compatible with two inequivalent common
left multipliers in each trace-zero regime, and three-point translated samples
compatible with two distinct common left multipliers whenever $\tau\neq0$.
\end{theorem}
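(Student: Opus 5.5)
The plan is to reduce the sharpness statement to building a non-admissible \emph{relative multiplier}. Suppose $H\in\GL_2(k)$ satisfies $HK_i\in\Orb(C)$ for every $i$ but $H\notin\Gamma_C(k)$. Then the sample $G_i:=FK_i$ also has the representation $G_i=(FH^{-1})(HK_i)$. The two multipliers $F$ and $F':=FH^{-1}$ satisfy $F'^{-1}F=H\notin\Gamma_C(k)$, so by Proposition~\ref{prop:translated-supports-general} they are inequivalent. We may take $F=I_2$.

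Over a field, a matrix lies in $\Orb(C)$ if and only if it has characteristic polynomial $Z^2-\tau Z+\nu$ and is non-scalar. This follows from Lemma~\ref{lem:cyclic-companion}, since over a field ``cyclic'' is the same as ``non-scalar''. So we need
\[
  \tr(HK_i)=\tau,\qquad \det(H)\det(K_i)=\nu,\qquad HK_i\notin kI_2 .
\]
The non-scalar condition fails only when $K_i\in kH^{-1}$, which is a single line; it will be avoided by perturbing the choices. All constructions will use $K_1=C$ together with conjugates such as $U_\pm CU_\pm^{-1}$ and $\operatorname{diag}(-1,1)C\operatorname{diag}(-1,1)$ from the proof of Lemma~\ref{lem:companion-orbit-differences}, or their companions under further elementary conjugations.

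\textbf{Case $\tau\neq0$, three points.} Let $W=\operatorname{span}_k\{K_1,K_2,K_3\}$. For generic choices $W$ is $3$-dimensional, so its trace-orthogonal $W^\perp$ is a line $kX$. Since $\tr K_1=\tau\neq0$, $W\not\subseteq\mathfrak{sl}_2(k)$, so $X$ is not scalar by Lemma~\ref{lem:trace-annihilator}. Put $H=I_2+sX$ with $s\neq0$. The trace equations hold automatically, because $\tr(HK_i)=\tr K_i+s\tr(XK_i)=\tau$. It remains to arrange $\det H=1$ when $\nu\neq0$ (when $\nu=0$ the determinant condition is vacuous). Expanding,
\[
  \det(I_2+sX)=1+s\tr X+s^2\det X ,
\]
so $s=-\tr X/\det X$ works whenever $\det X\neq0$ and $\tr X\neq0$; if $\tr X=\det X=0$, any $s$ works. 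The computational step is therefore to choose $K_2,K_3$ explicitly in terms of $\tau,\nu$ so that the generator $X$ of $W^\perp$ avoids the two bad cases ($\tr X=0\neq\det X$ and $\det X=0\neq\tr X$). I would compute $X$ as a cross-product-type cofactor vector in the basis $E_{11},E_{12},E_{21},E_{22}$, and adjust the conjugating matrices (for instance $U_{\pm}$ replaced by $\begin{pmatrix}1&c\\0&1\end{pmatrix}$ with a free parameter $c$) until the condition holds uniformly in $\tau,\nu$.

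\textbf{Trace-zero cases, two points.} Choose linearly independent $K_1,K_2\in\Orb(C)\subseteq\mathfrak{sl}_2(k)$. Then $W^\perp$ is $2$-dimensional and contains $I_2$, so $W^\perp=kI_2\oplus kX$ with $X\in\mathfrak{sl}_2(k)\setminus\{0\}$. Take $H=aI_2+bX$ with $b\neq0$, so that $H$ is non-scalar and $\tr(HK_i)=0$.
\begin{itemize}
  \item If $\nu=0$, the product $HK_i$ is automatically nilpotent, so $a=1$ and any $b\neq0$ work, and $H\notin k^\times I_2$.
  \item If $\nu\neq0$, we need $\det H=a^2+b^2\det X=1$. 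This is a conic containing the rational point $(1,0)$. If $\det X=0$, take $a=1$ and any $b\neq0$. Otherwise, parametrizing the lines through $(1,0)$ gives points with $b\neq0$, provided the conic has a point other than $(\pm1,0)$.
\end{itemize}

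The main obstacle is very small fields, notably $k=\mathbb F_3$. There the conic $a^2+b^2d=1$ with $-d$ a nonzero square has only the two points $(\pm1,0)$, and the generic-choice arguments above may also fail. I would handle this by choosing $K_2$ so that $-\det X$ is a nonsquare (or $\det X=0$), which I expect to be possible by varying the conjugating matrix. Failing that, I would treat the finitely many small fields by direct enumeration, as the paper's Appendix~A presumably does.
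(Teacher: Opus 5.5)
Your reduction is correct, and it is exactly the mechanism behind the paper's Appendix~A. A relative multiplier $H\notin\Gamma_C(k)$ with $HK_i\in\Orb(C)$ yields two inequivalent multipliers. Once $W=\operatorname{span}_k\{K_i\}$ has dimension $3$ (resp.\ $2$), the trace equations force $H\in I_2+kX$ (resp.\ $H\in kI_2\oplus kX$).

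The gap is that the theorem is a pure existence statement, and you never exhibit a sample.
\begin{itemize}
\item For $\tau\neq0$, the entire content is finding $K_2,K_3$ with $\dim W=3$ and, when $\nu\neq0$, $\tr X\neq0\neq\det X$. You defer this to ``I would compute \dots\ and adjust''.
\item The trace-zero semisimple case over $\mathbb F_3$ is left to ``I expect'' or to enumeration.
\item Non-scalarity of $HK_i$ in the Jordan case is left to ``perturbing the choices''. Genericity and perturbation are not arguments over small finite fields, where there is no room to spare.
\item Invertibility of $H$ is never checked when $\nu=0$. Your claim that $a=1$ and any $b\neq0$ work in the nilpotent case is false for $K_1=E_{21}$, $K_2=E_{12}$ over $\mathbb F_3$: there $X=\operatorname{diag}(1,-1)$ and both $I_2\pm X$ are singular. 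Taking $a=0$ would repair this instance.
\end{itemize}

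The missing ingredient is one explicit family that works uniformly over every field of characteristic $\neq2$, which the paper supplies.
\begin{itemize}
\item \emph{The case $\tau\neq0$, $\Delta\neq0$.} Take $C=\begin{pmatrix}0&\gamma\\1&\tau\end{pmatrix}$, $K_1=C$, $K_2=U_sCU_s^{-1}$, and $K_3=DCD^{-1}$ with $D=\operatorname{diag}(-1,1)$. The minor $-2s\tau$ gives $\dim W=3$. Then $W^\perp=kH_s$ with $H_s=\begin{pmatrix}s-\tau&-\gamma\\1&0\end{pmatrix}$, so $\tr H_s=s-\tau$ and $\det H_s=\gamma=-\nu$. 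Any $s\notin\{0,\tau\}$ (available since $|k|\ge3$) avoids both of your bad cases when $\nu\neq0$, with $t_*=(\tau-s)/\gamma$. When $\nu=0$, $\det(I_2+tH_s)=1+t(s-\tau)$ is linear in $t$, so invertibility is automatic for all but one $t$.
\item \emph{The trace-zero semisimple case.} The paper's $A$ equals $aI_2+bX_0$, where $X_0=\begin{pmatrix}s/2&-\gamma\\1&-s/2\end{pmatrix}$ spans the traceless part of $W^\perp$, $a=1-s^2/(2\gamma)$ and $b=s/\gamma\neq0$. So it is a point of your conic with $b\neq0$ over every field, $\mathbb F_3$ included.
\item \emph{The nilpotent and Jordan cases.} These are closed by the explicit matrices in Appendix~A, where non-scalarity of each product is checked directly.
\end{itemize}
With these choices no small-field enumeration is needed.
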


The proof is given by explicit constructions in Appendix~\ref{app:sharpness}.

% =====================================================================
\section{Local rings and residue-field detection}
\label{sec:local-specialization}
% =====================================================================

Let $(R,\mathfrak m)$ be a commutative local ring with $2\in R^\times$ and
residue field $k=R/\mathfrak m$, and let $C\in\Mat_2(R)$ be cyclic.
No finiteness or principal-ideal hypothesis is needed in this section.  The
orbit-rigidity and finite-sample theorems have
already been proved over $R$; locality is used only to simplify the sample-based
saturation criterion and the scalar-symmetry group.

\begin{proposition}[Residue-field criterion for saturation]
\label{prop:local-residue-saturation}
Let $X_1,\ldots,X_r\in\mathfrak{sl}_2(R)$ and write bars for reduction modulo
$\mathfrak m$.  Then
\[
  \operatorname{span}_R\{X_1,\ldots,X_r\}=\mathfrak{sl}_2(R)
  \quad\Longleftrightarrow\quad
  \operatorname{span}_k\{\overline X_1,\ldots,\overline X_r\}
  =\mathfrak{sl}_2(k).
\]
Consequently, for translated samples $G_i=FK_i$, affine saturation is detected
by the reduced differences $\overline G_i-\overline G_1$, and in the trace-zero
case linear saturation is detected by the reduced samples $\overline G_i$.
\end{proposition}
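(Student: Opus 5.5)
The plan is to reduce to Lemma~\ref{lem:determinantal-generation} and use that an ideal in a local ring is the unit ideal exactly when it is not contained in $\mathfrak m$. Fix the ordered basis $E_{11}-E_{22},E_{12},E_{21}$ of $\mathfrak{sl}_2(R)$. Write $B\in\Mat_{3\times r}(R)$ for the coordinate matrix of $X_1,\ldots,X_r$. By Lemma~\ref{lem:determinantal-generation}, the left-hand condition is equivalent to the maximal ($3\times3$) minors of $B$ generating $R$. In the local ring $R$, this holds if and only if some $3\times3$ minor of $B$ is a unit, that is, lies outside $\mathfrak m$.

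Next I will pass to the residue field. Since $2\in R^\times$, its image in $k$ is nonzero, so $\operatorname{char}k\neq2$. The reductions $\overline{E_{11}-E_{22}},\overline E_{12},\overline E_{21}$ then form a $k$-basis of $\mathfrak{sl}_2(k)$. The coordinate matrix of $\overline X_1,\ldots,\overline X_r$ in this basis is $\overline B$, and reduction commutes with taking minors. Hence some $3\times3$ minor of $B$ is a unit if and only if some $3\times3$ minor of $\overline B$ is nonzero, which says that $\overline B$ has rank $3$. That is exactly the condition that $\overline X_1,\ldots,\overline X_r$ span $\mathfrak{sl}_2(k)$. As a cross-check, the forward implication is trivial, since reduction maps a spanning set onto a spanning set. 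The reverse implication is also Nakayama's lemma applied to the finitely generated module $\mathfrak{sl}_2(R)/N$.

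For the consequence, I apply the equivalence to $X_i=K_i-K_1$. These are traceless, and $G_i-G_1=F(K_i-K_1)$, so $\overline G_i-\overline G_1=\overline F(\overline K_i-\overline K_1)$. Left multiplication by $\overline F\in\GL_2(k)$ preserves the rank of the coordinate matrix, as in Lemma~\ref{lem:determinantal-invariance} over $k$. The minors criterion above then says saturation holds if and only if $\mathfrak D_3(\overline G_2-\overline G_1,\ldots,\overline G_m-\overline G_1)=k$. This is a condition on the reduced differences alone. An equivalent route is to reduce the ideal in Proposition~\ref{prop:sample-saturation-general}: an ideal of $R$ is all of $R$ exactly when its image in $k$ is nonzero. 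The trace-zero case with $X_i=K_i$ and $G_i=FK_i$ is identical.

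No step is a real obstacle. The one point needing care is that the reduced basis is still a basis of $\mathfrak{sl}_2(k)$, which uses $2\in R^\times$. Without it, the traceless and scalar parts would fail to split over $k$.
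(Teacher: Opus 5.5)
Your proof is correct and follows the paper's argument: Lemma~\ref{lem:determinantal-generation}, the local-ring fact that the minor ideal is $R$ exactly when some maximal minor is a unit (equivalently, when the reduced coordinate matrix has rank three, or via Nakayama), and Proposition~\ref{prop:sample-saturation-general} to pass from the $K_i$ to the $G_i$. One small correction: $E_{11}-E_{22},E_{12},E_{21}$ is a basis of the traceless matrices over every commutative ring, so its reduction is a basis of $\mathfrak{sl}_2(k)$ without using $2\in R^\times$; your closing remark therefore misplaces the role of that hypothesis, though this does not affect the argument.
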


\begin{proof}
By Lemma~\ref{lem:determinantal-generation}, generation is equivalent to the
third determinantal ideal being the unit ideal.  Over a local ring this is
equivalent to at least one maximal minor being a unit, which in turn is
equivalent to the corresponding reduced matrix having rank three.  Equivalently,
one may apply Nakayama's lemma to the quotient of $\mathfrak{sl}_2(R)$ by the
generated submodule.  Proposition~\ref{prop:sample-saturation-general} transfers
the criterion from the underlying orbit points $K_i$ to the translated matrices $G_i$.
\end{proof}

The general scalar-symmetry formula remains
\[
  \Sigma_C(R)=
  \{t\in R^\times:(t-1)\tau=0,\ (t^2-1)\nu=0\}.
  \tag{7.1}
\]
When the determinant is a unit, locality collapses the possible involutions.

\begin{corollary}[Unit-determinant local case]
\label{cor:local-unit-det}
Assume $\nu\in R^\times$.  Then
\[
  \Sigma_C(R)=
  \begin{cases}
    \{1\},&\tau\neq0,\\[1mm]
    \{\pm1\},&\tau=0.
  \end{cases}
\]
Hence a saturated finite family determines $F$ exactly when $\tau\neq0$ and up
to sign when $\tau=0$.
\end{corollary}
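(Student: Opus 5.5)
The computation rests on the general formula (7.1), $\Sigma_C(R)=\{t\in R^\times:(t-1)\tau=0,\ (t^2-1)\nu=0\}$. Since $\nu\in R^\times$, the second condition is equivalent to $t^2=1$, i.e.\ $(t-1)(t+1)=0$. I will first show that in a local ring with $2\in R^\times$ this forces $t=\pm1$. Indeed, $(t-1)-(t+1)=-2$ is a unit, so $t-1$ and $t+1$ cannot both lie in $\mathfrak m$. If $t-1$ is a unit, then $t+1=0$; if $t+1$ is a unit, then $t-1=0$. Hence $\{t: t^2=1\}=\{\pm1\}$. Both values are units, and they are distinct because $2\neq0$.

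Next I impose the trace condition $(t-1)\tau=0$. For $t=1$ it holds automatically. For $t=-1$ it reads $-2\tau=0$, which is equivalent to $\tau=0$ since $2$ is a unit. So $\Sigma_C(R)=\{1\}$ when $\tau\neq0$, and $\Sigma_C(R)=\{\pm1\}$ when $\tau=0$.

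The only delicate point is how to read ``$\tau\neq0$''. In a local ring $\tau$ may be a nonzero element of $\mathfrak m$, but the argument only uses $\tau\neq0$ as an element of $R$, never invertibility, so the dichotomy is exactly as stated. I expect no real obstacle beyond the locality step for $t^2=1$. That step genuinely uses locality: in a product ring, $t^2=1$ has other solutions.

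For the final sentence, I will invoke Theorem~\ref{thm:finite-sample-rigidity-general} and Corollary~\ref{cor:finite-samples-complete-orbit}. Under either saturation hypothesis the consistency set is $\{tF^{-1}:t\in\Sigma_C(R)\}$, so the recovered multiplier class $F\Gamma_C(R)$ is $\{F\}$ when $\tau\neq0$ and $\{\pm F\}$ when $\tau=0$. By Proposition~\ref{prop:local-residue-saturation}, saturation can moreover be checked on the residue field.
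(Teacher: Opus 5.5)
Your proposal is correct and follows the paper's argument essentially verbatim. With $\nu$ a unit, (7.1) reduces to $(t-1)(t+1)=0$; the factors differ by the unit $2$, so locality forces one to vanish, and the trace condition then rules out $t=-1$ exactly when $\tau\neq0$. Your explicit appeal to Theorem~\ref{thm:finite-sample-rigidity-general} and Corollary~\ref{cor:finite-samples-complete-orbit} for the final sentence simply spells out what the paper leaves implicit.
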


\begin{proof}
Since $\nu$ is a unit, (7.1) gives $(t-1)(t+1)=0$.  The two factors differ by
$2$, a unit.  In a local ring they cannot both be nonunits, so one factor is a
unit and the other must vanish.  Thus $t=\pm1$.  If $\tau\neq0$, the value
$t=-1$ would give $2\tau=0$, impossible because $2$ is a unit.  If $\tau=0$,
both signs occur.
\end{proof}

This local specialization also clarifies why no principal-ideal hypothesis is
needed: the deterministic theory depends only on free modules, the trace
pairing, and unit detection for maximal minors.

% =====================================================================
\section{Finite-field orbit geometry and saturation probabilities}
\label{sec:finite-geometry}
% =====================================================================

Let $q$ be an odd prime power.  We now quantify how often independently
sampled orbit points over $\mathbb F_q$ satisfy the minimal saturation conditions of
Section~\ref{sec:field-specialization}.  By
Proposition~\ref{prop:natural-sampling}, the same distributions arise from
uniform vectors in the transverse-module model after conditioning on admissibility.

\subsection{Conjugacy types and orbit sizes}

We use the split, nonsplit, and Jordan terminology from
Section~\ref{sec:field-specialization}.

\begin{proposition}[Orbit cardinalities]
\label{prop:orbit-cardinalities}
For a cyclic $C\in\Mat_2(\mathbb F_q)$,
\[
  |\Orb(C)|=
  \begin{cases}
    q(q+1),&C\text{ split regular semisimple},\\
    q(q-1),&C\text{ nonsplit regular semisimple},\\
    q^2-1,&C\text{ nontrivial Jordan}.
  \end{cases}
  \tag{8.1}
\]
\end{proposition}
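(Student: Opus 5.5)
We use orbit–stabilizer: $|\Orb(C)|=|\GL_2(\mathbb F_q)|/|Z(C)|$, where $Z(C)=\{P\in\GL_2(\mathbb F_q):PC=CP\}$ is the centralizer. We have $|\GL_2(\mathbb F_q)|=(q^2-1)(q^2-q)=q(q-1)^2(q+1)$, so the proof reduces to computing $|Z(C)|$ in each of the three cases.

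The key input is cyclicity. For cyclic $C$, the full centralizer in $\Mat_2(\mathbb F_q)$ is the algebra $\mathbb F_q[C]=\{aI_2+bC\}$, which is two-dimensional. To see this, take a cyclic vector $v$. A matrix $X$ commuting with $C$ is determined by $Xv$, because $X(Cv)=CXv$. Writing $Xv=av+bCv$, we get $X=aI_2+bC$. Consequently $Z(C)=\mathbb F_q[C]^\times$, the unit group of the quadratic algebra
\[
  \mathbb F_q[C]\cong \mathbb F_q[Z]/(\chi_C(Z)),
\]
since $\chi_C$ is also the minimal polynomial of $C$.

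We then count units according to how $\chi_C$ factors.
\begin{itemize}
  \item \emph{Split regular semisimple.} Here $\chi_C$ has two distinct roots in $\mathbb F_q$, so CRT gives $\mathbb F_q[C]\cong\mathbb F_q\times\mathbb F_q$. The unit group has order $(q-1)^2$, and the orbit has size $q(q+1)$.
  \item \emph{Nonsplit regular semisimple.} Here $\chi_C$ is irreducible, so $\mathbb F_q[C]\cong\mathbb F_{q^2}$. The unit group has order $q^2-1$, and the orbit has size $q(q-1)$.
  \item \emph{Nontrivial Jordan.} Here $\Delta=0$ and $\chi_C=(Z-\lambda)^2$ with $\lambda=\tau/2\in\mathbb F_q$; this uses that $q$ is odd. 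Then $\mathbb F_q[C]\cong\mathbb F_q[\epsilon]/(\epsilon^2)$, whose units are the elements $a+b\epsilon$ with $a\neq0$. There are $q(q-1)$ of these, and the orbit has size $(q-1)(q+1)=q^2-1$.
\end{itemize}

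The trichotomy is exhaustive, following the discriminant classification recalled in Section~\ref{sec:field-specialization}. The main point requiring care is the centralizer identification via the cyclic vector. Once that is established, everything else is arithmetic.
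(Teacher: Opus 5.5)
Your proof is correct and follows the same route as the paper: orbit--stabilizer, with centralizer orders $(q-1)^2$, $q^2-1$, and $q(q-1)$ divided into $|\GL_2(\mathbb F_q)|=(q^2-1)(q^2-q)$. The paper only asserts these centralizer orders. Your cyclic-vector identification of the centralizer with $\mathbb F_q[C]^\times\cong(\mathbb F_q[Z]/(\chi_C))^\times$ supplies the justification the paper leaves implicit.
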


\begin{proof}
The centralizer in $\GL_2(\mathbb F_q)$ has order $(q-1)^2$ in the split regular
semisimple case, $q^2-1$ in the nonsplit case, and $q(q-1)$ in the nontrivial
Jordan case.  Dividing
\[
  |\GL_2(\mathbb F_q)|=(q^2-1)(q^2-q)
\]
by the corresponding centralizer order gives (8.1).
\end{proof}

The exact saturation probabilities depend on this split/nonsplit geometry.
Their finite-geometric derivation is given in the appendix.

\begin{theorem}[Exact minimal-sample saturation probabilities]
\label{thm:exact-probabilities}
Let $C\in\Mat_2(\mathbb F_q)$ be cyclic, and let the $K_i$ be independent
and uniform on $\Orb(C)$.
\begin{enumerate}
  \item Suppose $\tau\neq0$ and $\Delta\neq0$.  If $C$ is split, then
  \[
    \Pr[K_1,\ldots,K_4\text{ are affinely saturated}]
    =\frac{q-1}{q^2(q+1)^3}\bigl(q^4+3q^3-7q^2+11q-10\bigr).
    \tag{8.2}
  \]
  If $C$ is nonsplit, then
  \[
    \Pr[K_1,\ldots,K_4\text{ are affinely saturated}]
    =\frac{(q-2)(q+1)(q^2-2q-1)}{q^2(q-1)^2}.
    \tag{8.3}
  \]

  \item Suppose $\tau=0$ and $\nu\neq0$.  If $C$ is split, then
  \[
    \Pr[K_1,K_2,K_3\text{ are linearly saturated}]
    =\frac{(q-1)(q^3+2q^2-4q-1)}{q^2(q+1)^2}.
    \tag{8.4}
  \]
  If $C$ is nonsplit, then
  \[
    \Pr[K_1,K_2,K_3\text{ are linearly saturated}]
    =\frac{(q+1)(q^2-3q+1)}{q^2(q-1)}.
    \tag{8.5}
  \]

  \item If $C$ is nonzero nilpotent, then
  \[
    \Pr[K_1,K_2,K_3\text{ are linearly saturated}]
    =\frac{q(q-1)}{(q+1)^2}.
    \tag{8.6}
  \]
\end{enumerate}
\end{theorem}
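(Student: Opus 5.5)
We reduce every case to counting linear dependencies among orbit points. Because $\operatorname{tr}$ is constant on $\Orb(C)$, affine saturation of $K_1,\dots,K_4$ is equivalent, when $\tau\neq0$, to linear independence of $K_1,\dots,K_4$ in $\Mat_2(\mathbb F_q)$. Indeed, $\sum c_iK_i=0$ forces $\tau\sum c_i=0$, hence $\sum c_i=0$, and such relations are exactly the relations among the differences. In the trace-zero cases, linear saturation of $K_1,K_2,K_3$ is linear independence inside the three-dimensional space $\mathfrak{sl}_2(\mathbb F_q)$. So in all cases we must compute
\[
\Pr[\text{independent}]=\prod_{j}\Bigl(1-\frac{|\Orb(C)\cap W_j|}{|\Orb(C)|}\Bigr),
\]
averaged over the possible spans $W_j$ of the first $j$ points. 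The point is that $|\Orb(C)\cap W|$ depends on the geometry of the subspace $W$. We build the count sequentially: $K_1$ is arbitrary; $K_2$ fails only if $K_2\in\mathbb F_qK_1\cap\Orb(C)$; $K_3$ fails if it lies in the plane spanned by $K_1,K_2$; and, in part (1), $K_4$ fails if it lies in the $3$-space spanned by $K_1,K_2,K_3$.

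For the trace-zero cases we identify $\mathfrak{sl}_2$ with the quadratic space $(\mathfrak{sl}_2,\det)$. The orbit of a traceless cyclic $C$ is the affine quadric $\{X:\tr X=0,\ \det X=\nu\}$ of size given by (8.1). For the nilpotent case it is the punctured cone $\{\det X=0\}\setminus\{0\}$. Line intersections are easy: $\mathbb F_qK_1\cap\Orb(C)=\{\pm K_1\}$ for $\nu\neq0$, and $\mathbb F_q^\times K_1$ in the nilpotent case. For planes $W=\operatorname{span}(K_1,K_2)$, the restriction of $\det$ to $W$ is a binary quadratic form of type hyperbolic, anisotropic, or degenerate. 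The number of solutions of $\det|_W=\nu$ is then $q-1$, $q+1$, or $0$ or $2q$ accordingly, and for the cone it is $2(q-1)$, $0$, or $q-1$. We then count, for fixed $K_1$, how many $K_2$ yield each plane type. Equivalently, we classify planes through $K_1$ by the discriminant of $\det|_W$, which depends on $\nu\cdot(\text{the value } \det$ on $K_1^\perp\cap W)$. This is where split versus nonsplit enters, through whether $-\nu$ is a square. Summing $\Pr[K_3\notin W]$ over plane types yields (8.4)--(8.6). As a sanity check, the nilpotent answer $q(q-1)/(q+1)^2$ should factor as $\frac{q^2-q}{q^2-1}\cdot(\text{plane term})$, and we verify this.

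For part (1) we use the same scheme in the $4$-dimensional space with the quadratic form $\det$. The orbit is $\{\tr X=\tau,\ \det X=\nu\}$, that is, a quadric section of an affine hyperplane. Since $K_i$ are independent exactly when their traceless parts are affinely independent, it is cleaner to translate by $-\frac{\tau}{2}I_2$ and work in $\mathfrak{sl}_2$ with the quadric $Q=\{\det=\nu-\tau^2/4=-\Delta/4\}$. In this form affine saturation means that $K_1^\circ,\dots,K_4^\circ$ are affinely independent, so no four of them lie in an affine plane. We count affine lines through two points of $Q$; a line meets $Q$ in at most $2$ points unless it lies in $Q$ (a ruling in the split case). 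Then we count affine planes through three points: the sections are conics with $q\pm1$ points, or line pairs, according to the discriminant of the induced form and its position relative to the center. The probability is $1$ minus the collinearity contribution, minus the averaged coplanarity contribution.

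The main obstacle is this last enumeration. Affine planes of $\mathfrak{sl}_2(\mathbb F_q)$ meeting the quadric $Q$ split into several types: through the origin or not, tangent or not, with the conic section elliptic, hyperbolic, or degenerate. The joint distribution of plane type given three random points of $Q$ has to be computed, and split versus nonsplit changes which rulings exist. We would first handle this by fixing $K_1^\circ$ via the transitivity of $\GL_2$, then parametrize planes through $K_1^\circ$ by their normal vector $n$, using the trace pairing. We would classify the plane section by $\det(n)$ and by $\langle n,K_1^\circ\rangle$, count the points of $Q$ in each plane type, and sum the quadratic expressions. Finally, we would confirm the polynomial identities (8.2)--(8.3) against brute-force values at $q=3,5$.
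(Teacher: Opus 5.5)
Parts (2) and (3) of your proposal are correct and follow essentially the paper's argument. The paper also classifies two-dimensional $L\subset\mathfrak{sl}_2$ by the position of $\mathbb P(L)$ relative to the nilpotent conic, with exactly your values of $|L\cap\Orb(C)|$. It then sums $m_L(m_L-2)(|\Orb(C)|-m_L)$ over all lines, instead of over lines through a fixed $[K_1]$. To finish your fixed-$K_1$ version, you need one more fact. When $-\nu$ is a square, $[K_1]$ is an exterior point of the conic, so $2$ tangent, $(q-1)/2$ secant and $(q-1)/2$ external lines pass through it. Otherwise it is an interior point, with $(q+1)/2$ secant and $(q+1)/2$ external lines. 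This yields (8.4) and (8.5), and your nilpotent factorization gives (8.6).

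Part (1), however, is not proved. The enumeration you defer is the whole content of (8.2)--(8.3), and your sketch of it is incomplete:
\begin{enumerate}
\item Besides sections with $q+1$ or $q-1$ affine points, there are $q^2-1$ nontangent planes $\{\tr(nX)=c\}$ with $\det n=0\neq c$. These cut out parabolas with exactly $q$ affine points, which your list ``conics with $q\pm1$ points, or line pairs'' omits.
\item Tangent sections must be separated by type. In the split case, a plane tangent at an affine point meets the quadric in two rulings with $2q-1$ affine points and $6(q-1)^3$ ordered noncollinear triples. The $q+1$ planes $\{\tr(nX)=0\}$ with $n$ nonzero nilpotent give two parallel rulings with $2q$ affine points and $6q^2(q-1)$ triples. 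In the nonsplit case, tangent planes contain no noncollinear triple.
\item A brute-force check at $q=3,5$ cannot substitute for this count.
\end{enumerate}

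The missing idea is the paper's projective closure. Close the quadric $\{\det X=\eta\}$, with $\eta=-\Delta/4$, in $\mathbb P^3$; its plane at infinity meets it in the nilpotent conic $N_\infty$. Group the $q$ affine planes by their line at infinity, which is external, tangent, or secant to $N_\infty$. Use the polarity to show that $0,1,2$ of these planes are tangent in the hyperbolic case and $2,1,0$ in the elliptic case. Nontangent sections then have $q+1$, $q$, $q-1$ affine points, and summing $t\,(|\Orb(C)|-m)$ over all planes gives (8.2)--(8.3).

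In your normal-vector language the same data come from writing $X=\lambda n+Y$ with $\tr(nY)=0$. For $\det n\neq0$, the section is $\{\det Y=\eta-c^2/(4\det n)\}$ inside $n^\perp$. It is degenerate exactly when $c^2=4\eta\det n$, and $\det n=0$ is the parabolic or infinity-tangent case. Summing over all planes, rather than fixing $K_1^\circ$, also avoids tracking where $K_1^\circ$ sits inside a line-pair section.
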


By Proposition~\ref{prop:sample-saturation-general}, these are also the
success probabilities of the corresponding determinantal tests on the
translated samples $G_i=FK_i$.

In every case in Theorem~\ref{thm:exact-probabilities}, the probability
tends to $1$ as $q\to\infty$.  Thus the sharp deterministic sample size is
also generically sufficient with probability $1-O(q^{-1})$ under uniform orbit
sampling.  The nonzero-eigenvalue Jordan case is not needed for these exact
formulas; its deterministic four-sample threshold is already covered by
Theorem~\ref{thm:field-sharpness}.

\subsection{Transfer to finite local rings}

The same probability calculation controls a finite local ring whenever the
same saturation event is considered after reduction.

\begin{corollary}[Transfer of saturation probabilities]
\label{cor:finite-local-transfer}
Let $R$ be a finite commutative local ring with residue field $\mathbb F_q$,
where $q$ is odd.  Let $C\in\Mat_2(R)$ be cyclic, and write
$\overline C\in\Mat_2(\mathbb F_q)$ for its reduction.  Sample $P_i$ independently and
uniformly from $\GL_2(R)$, and define
\[
  K_i:=P_iCP_i^{-1}.
\]
Then the probability of affine or linear saturation over $R$ equals the
probability of the corresponding spanning event for independent uniform
points of the similarity orbit $\Orb(\overline C)$ over $\mathbb F_q$.
Consequently, formulas~(8.2)--(8.6) apply according to the trace and conjugacy
type of $\overline C$ whenever they describe the same affine or linear
saturation condition over $R$.
\end{corollary}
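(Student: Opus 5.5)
The plan is to reduce both sides to one event over the residue field and to identify the two distributions. The saturation event over $R$ is determined by reduction. By Proposition~\ref{prop:local-residue-saturation}, the differences $K_i-K_1$ span $\mathfrak{sl}_2(R)$ if and only if the reduced differences $\overline K_i-\overline K_1$ span $\mathfrak{sl}_2(\mathbb F_q)$. In the trace-zero case, the $K_i$ span $\mathfrak{sl}_2(R)$ if and only if the $\overline K_i$ span $\mathfrak{sl}_2(\mathbb F_q)$. Since $\overline{K_i}=\overline P_i\,\overline C\,\overline P_i^{-1}$, the event depends only on the reductions $\overline P_i$, and it is the corresponding spanning event for the points $\overline P_i\overline C\,\overline P_i^{-1}\in\Orb(\overline C)$.

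Next I would identify the distribution of the $\overline P_i$. The reduction map $\GL_2(R)\to\GL_2(\mathbb F_q)$ is a group homomorphism. It is surjective: lift the entries of a matrix arbitrarily, and the lift has determinant reducing to a nonzero element, which is a unit in the local ring $R$. So each fiber is a coset of the kernel, and all fibers have equal size. Therefore uniform $P_i$ on $\GL_2(R)$ push forward to uniform $\overline P_i$ on $\GL_2(\mathbb F_q)$, and independence is preserved because the map is applied coordinatewise. As in the proof of Proposition~\ref{prop:natural-sampling}, the fibers of $\overline P\mapsto \overline P\,\overline C\,\overline P^{-1}$ are cosets of the centralizer of $\overline C$, so the points $\overline K_i$ are independent and uniform on $\Orb(\overline C)$. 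The two probabilities therefore coincide.

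Finally, I would verify that formulas (8.2)–(8.6) apply. Cyclicity reduces: if $(v,Cv)$ is a basis of $R^2$, then $(\overline v,\overline C\overline v)$ is a basis of $\mathbb F_q^2$. Hence $\overline C$ is cyclic, and Theorem~\ref{thm:exact-probabilities} applies according to the trace and conjugacy type of $\overline C$.

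The only delicate point is matching the hypotheses. Linear saturation over $R$ requires $\tau=0$ in $R$, which forces $\overline\tau=0$. If instead $\tau\ne0$ in $R$ but $\overline\tau=0$, the relevant event over $R$ is affine saturation, and the statement's proviso ``whenever they describe the same saturation condition'' is what restricts us to matching cases. I would spell out this bookkeeping briefly.
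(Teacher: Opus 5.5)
Your proposal is correct and follows essentially the same route as the paper. Surjectivity of $\GL_2(R)\to\GL_2(\mathbb F_q)$ with equal-size fibers makes the $\overline P_i$ independent and uniform, hence the $\overline K_i$ independent and uniform on $\Orb(\overline C)$, and Proposition~\ref{prop:local-residue-saturation} identifies the saturation event over $R$ with the spanning event over $\mathbb F_q$. Your checks that $\overline C$ is cyclic (so Theorem~\ref{thm:exact-probabilities} applies) and your trace bookkeeping are details the paper leaves implicit or defers to the following remark.
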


\begin{proof}
The reduction map $\GL_2(R)\to\GL_2(\mathbb F_q)$ is surjective: any lift of an invertible
residue matrix has determinant outside the maximal ideal and is therefore
invertible.  Its fibers are cosets of the kernel, so the reductions
$\overline P_i$ are independent and uniform in $\GL_2(\mathbb F_q)$.  Hence
\[
  \overline K_i=\overline P_i\,\overline C\,\overline P_i^{-1}
\]
are independent and uniform on $\Orb(\overline C)$.
Proposition~\ref{prop:local-residue-saturation} identifies the saturation event
over $R$ with the corresponding spanning event over $\mathbb F_q$.
\end{proof}

\begin{remark}
If $\tau$ is nonzero in $R$ but reduces to zero in $\mathbb F_q$, affine saturation
remains the relevant condition over $R$.  Thus transfer of a saturation
probability should not be confused with transfer of the minimal sample count
for a different trace type over the residue field.
\end{remark}

% =====================================================================
\section{Arithmetic realization and scope of the method}
\label{sec:arithmetic}
% =====================================================================

The preceding theory is entirely linear-algebraic.  Its input is a pair of
free rank-two modules over a commutative ring $R$, together with an
isomorphism $F$ between them.  We first record an arithmetic source of exactly
this module-theoretic data.  No statement about abelian varieties or isogenies
is used in the proofs of Sections~\ref{sec:cyclic-orbits}--\ref{sec:finite-geometry}.

\subsection{Torsion-module realization}

\begin{proposition}[Elliptic torsion realization]
\label{prop:elliptic-torsion-realization}
Let $k$ be a field, fix an algebraic closure $\overline k$ of $k$, and let
$E_1,E_2$ be elliptic curves over $k$.  Let $m\ge2$ be prime to
$\operatorname{char}(k)$, and let $f:E_1\to E_2$ be an isogeny of degree $n$
with $\gcd(m,n)=1$.  Define $R:=\mathbb Z/m\mathbb Z$.  Then
\[
  F:=f|_{E_1[m](\overline k)}:
  E_1[m](\overline k)\xrightarrow{\sim}E_2[m](\overline k)
\]
is an isomorphism of free rank-two $R$-modules.  If
$n\bar n\equiv1\pmod m$, then
\[
  F^{-1}=[\bar n] \,\widehat f|_{E_2[m](\overline k)}.
  \tag{9.1}
\]
\end{proposition}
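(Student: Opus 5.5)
The plan is to use standard facts about torsion of elliptic curves and the dual isogeny.

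\emph{Freeness.} Since $m$ is prime to $\operatorname{char}(k)$, the group $E_i[m](\overline k)$ is isomorphic to $(\mathbb Z/m\mathbb Z)^2$. This is the standard structure theorem for $m$-torsion, obtained from separability of $[m]$ and the degree count $\deg[m]=m^2$ applied prime-power by prime-power. So $E_1[m](\overline k)$ and $E_2[m](\overline k)$ are free $R$-modules of rank two.

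\emph{$F$ is an $R$-linear isomorphism.}
\begin{itemize}
\item Because $f$ is a group homomorphism, it commutes with $[m]$. Hence it maps $E_1[m]$ into $E_2[m]$, and the restriction $F$ is $\mathbb Z$-linear, hence $R$-linear.
\item For injectivity, recall $\widehat f\circ f=[n]_{E_1}$. If $P\in E_1[m]$ and $f(P)=O$, then $nP=O$; together with $mP=O$ and $\gcd(m,n)=1$, this forces $P=O$.
\item Both modules are finite of the same cardinality $m^2$, so the injective map $F$ is bijective.
\end{itemize}
Alternatively, one can build a two-sided inverse directly, which also proves (9.1).

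\emph{The inverse formula.} Let $\bar n$ satisfy $n\bar n\equiv1\pmod m$. Then $[\bar n][n]$ acts as the identity on any $m$-torsion module. The restriction $\widehat f|_{E_2[m]}$ lands in $E_1[m]$ by the same commutation argument as above. Using $\widehat f\circ f=[n]_{E_1}$ and $f\circ\widehat f=[n]_{E_2}$, and the fact that $[\bar n]$ commutes with the homomorphisms $f$ and $\widehat f$, we get
\[
[\bar n]\,\widehat f\circ F=[\bar n n]|_{E_1[m]}=\mathrm{id}
\qquad\text{and}\qquad
F\circ[\bar n]\,\widehat f|_{E_2[m]}=[\bar n n]|_{E_2[m]}=\mathrm{id}.
\]
This gives (9.1), and it yields bijectivity of $F$ independently of the counting argument.

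\emph{Main obstacle.} The only nonformal input is the structure theorem $E[m](\overline k)\cong(\mathbb Z/m)^2$ and the existence of the dual isogeny with $\widehat f f=[n]$ and $f\widehat f=[n]$. Both are standard (Silverman, Ch.~III), so I would cite them rather than reprove them.
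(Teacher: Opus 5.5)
Your proposal is correct and takes essentially the same route as the paper. Both use the structure theorem $E_i[m](\overline k)\simeq(\mathbb Z/m\mathbb Z)^2$ for freeness, and both combine the dual-isogeny identities $\widehat f f=[n]_{E_1}$, $f\widehat f=[n]_{E_2}$ with invertibility of $n$ modulo $m$ to obtain the two-sided inverse (9.1), and hence bijectivity. Your separate injectivity-plus-counting argument is redundant but harmless, and you are more explicit than the paper about why $F$ and $\widehat f$ preserve $m$-torsion and are $R$-linear.
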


\begin{proof}
Since $m$ is prime to $\operatorname{char}(k)$, the geometric torsion modules
satisfy
\[
  E_i[m](\overline k)\simeq(\mathbb Z/m\mathbb Z)^2.
\]
The dual-isogeny identities
$\widehat f f=[n]_{E_1}$ and $f\widehat f=[n]_{E_2}$ show that multiplication
by $n$ is the composite in either direction.  Since $n$ is a unit modulo $m$,
restriction to geometric $m$-torsion gives (9.1), and in particular shows that
$F$ is an isomorphism; see \cite{Milne1986}.
\end{proof}

Thus the identification with the notation of Sections~\ref{sec:offdiag} and
\ref{sec:transverse-realization} is simply
\[
  V=E_1[m](\overline k),\qquad
  W=E_2[m](\overline k),\qquad
  R=\mathbb Z/m\mathbb Z,
\]
with $F$ given by the torsion restriction of $f$.  When $m$ is odd,
$2\in R^\times$, so all rigidity results of Sections~\ref{sec:cyclic-orbits}
and~\ref{sec:common-multipliers} apply to this realization.

The entire block operator of Section~\ref{sec:offdiag}, not only the map $F$,
can be realized on torsion.  Choose integer lifts
$\widetilde\alpha,\widetilde\gamma,\widetilde\delta\in\mathbb Z$ of
$\alpha,\gamma,\delta\in R$, and choose $u\in\mathbb Z$ whose residue
class modulo $m$ is $\bar n$.  The product endomorphism
\[
  \widetilde T_f=
  \begin{pmatrix}
    [\widetilde\alpha]_{E_1} & [u]\widehat f\\
    [\widetilde\gamma]f & [\widetilde\delta]_{E_2}
  \end{pmatrix}
  \in \operatorname{End}_{\overline k}(E_1\times E_2)
\]
restricts on
$E_1[m](\overline k)\oplus E_2[m](\overline k)$ to
\[
  \begin{pmatrix}
    \alpha I & F^{-1}\\
    \gamma F & \delta I
  \end{pmatrix}=T_F.
\]
Thus the off-diagonal quadratic operator and the transverse-module
construction of Sections~\ref{sec:offdiag}--\ref{sec:transverse-realization}
are themselves obtained by torsion restriction from a product endomorphism.

\subsection{Off-diagonal maps in product endomorphism algebras}

The preceding proposition explains how an isogeny produces the module map
$F$.  Such maps occur naturally as off-diagonal components of product
endomorphisms.  For abelian varieties $A_1,A_2$ over $k$,
\[
  \operatorname{End}^0_k(A_1\times A_2)
  \cong
  \begin{pmatrix}
    \operatorname{End}^0_k(A_1)&\operatorname{Hom}^0_k(A_2,A_1)\\
    \operatorname{Hom}^0_k(A_1,A_2)&\operatorname{End}^0_k(A_2)
  \end{pmatrix},
  \tag{9.2}
\]
where $\operatorname{End}^0=\operatorname{End}\otimes_{\mathbb Z}\mathbb Q$
and $\operatorname{Hom}^0=\operatorname{Hom}\otimes_{\mathbb Z}\mathbb Q$.
The diagonal algebras in (9.2) need not be commutative, and the off-diagonal
$\operatorname{Hom}^0$ spaces are bimodules rather than coefficient rings.
The commutative ring used by the rigidity theory is instead the torsion
coefficient ring $R=\mathbb Z/m\mathbb Z$.  Formula (9.2) serves as a
structural source of the off-diagonal morphisms whose torsion restrictions
produce the maps $F$ and $F^{-1}$ above; it is not being substituted for $R$.

As a motivating example, not used elsewhere in the paper, let $J$ be a
superspecial genus-two Jacobian over $k$.  After base change to $\overline k$,
choose an unpolarized isomorphism
\[
  J_{\overline k}\simeq E_1\times E_2
\]
with supersingular elliptic curves $E_1,E_2$
\cite{AchterPries2015}.  The resulting block decomposition of
$\operatorname{End}^0_{\overline k}(J_{\overline k})$ contains rational
$\operatorname{Hom}^0$ spaces off the diagonal.  Any isogeny between the
elliptic factors whose degree is prime to $m$ therefore supplies, after
restriction to $m$-torsion, the finite-module data of
Proposition~\ref{prop:elliptic-torsion-realization}.

\subsection{Scope and extensions}

The rigidity results of Sections~\ref{sec:cyclic-orbits}--
\ref{sec:transverse-realization} turn an off-diagonal module map into a
finite-sample determination problem with an explicit determinantal criterion.
Once saturation holds, the finite family has exactly the same multiplier
ambiguity as the complete translated orbit.

The theory isolates this rank-two linear-algebraic step once the module data
are available; it does not reconstruct a global isogeny or an integral
endomorphism order.  Natural extensions include higher rank, characteristic
$2$, and higher-degree marked algebras.

% =====================================================================
\appendix
\renewcommand{\thetheorem}{\Alph{section}.\arabic{theorem}}
\section{Sharpness constructions}
\label{app:sharpness}
% =====================================================================

The following explicit constructions establish the lower bounds asserted in
Theorem~\ref{thm:field-sharpness}.

\begin{proof}[Proof of Theorem~\ref{thm:field-sharpness}]
By Lemma~\ref{lem:cyclic-companion}, similarity allows us to work with
\[
  C:=\begin{pmatrix}0&\gamma\\1&\tau\end{pmatrix},
  \qquad \nu=-\gamma.
\]
For $s\in k$, define
\[
  U_s:=
  \begin{pmatrix}
    1&s\\
    0&1
  \end{pmatrix}.
\]
\medskip
\noindent\emph{Trace-zero semisimple case.}
Suppose first that $\tau=0$ and $\nu\neq0$, so $\gamma\neq0$.  Choose
$s\in k^\times$ and define
\[
  K_1:=C,\qquad K_2:=U_sCU_s^{-1}, \qquad 
  A:=
  \begin{pmatrix}
    1&-s\\
    s/\gamma&1-s^2/\gamma
  \end{pmatrix}.
\]
A direct calculation gives
\[
  \det A=1,\qquad
  \operatorname{tr}(AK_1)=\operatorname{tr}(AK_2)=0,
\]
and
\[
  \det(AK_1)=\det(AK_2)=\det(C)=-\gamma.
\]
The common characteristic polynomial is $Z^2-\gamma$, whose discriminant
$4\gamma$ is nonzero.  Hence both products are cyclic and, by
Lemma~\ref{lem:cyclic-companion}, belong to $\Orb(C)$.  Since $A$ is
non-scalar, the same two-point translated sample is compatible with $F_0=I_2$ and
$F_1=A^{-1}$, which are not related by the sign stabilizer.

\medskip
\noindent\emph{Nilpotent case.}
Choose $s\in k^\times$ and take
\[
  C:=K_1:=\begin{pmatrix}0&0\\1&0\end{pmatrix},
  \qquad
  K_2:=U_sCU_s^{-1}
  =\begin{pmatrix}s&-s^2\\1&-s\end{pmatrix}.
\]
Choose $c\in k^\times$ with $cs\neq1$; such a choice exists because $k$ has
odd characteristic and only the value $c=s^{-1}$ is excluded.  Define
\[
  A:=\begin{pmatrix}1&0\\c&1-cs\end{pmatrix}.
\]
Then
\[
  \det A=1-cs\neq0,
\]
and
\[
  AK_1=
  \begin{pmatrix}0&0\\1-cs&0\end{pmatrix},
  \qquad
  AK_2=
  \begin{pmatrix}s&-s^2\\1&-s\end{pmatrix}.
\]
Both matrices are nonzero with trace and determinant zero.  By
Cayley--Hamilton they are nonzero nilpotent matrices, hence lie in the unique
nonzero nilpotent similarity orbit.  Since $A$ is non-scalar, two translated matrices do
not determine the projective class.

\medskip
\noindent\emph{Nonzero-trace regular semisimple case.}
Assume $\tau\neq0$ and $\Delta\neq0$.  By
Lemma~\ref{lem:cyclic-companion}, we may again take
\[
  C:=\begin{pmatrix}0&\gamma\\1&\tau\end{pmatrix},
  \qquad \nu=-\gamma.
\]
Choose $s\in k^\times$ with $s\neq\tau$ and define
\[
  K_1:=C,\qquad
  K_2:=U_sCU_s^{-1},\qquad
  K_3:=
  \begin{pmatrix}-1&0\\0&1\end{pmatrix}
  C
  \begin{pmatrix}-1&0\\0&1\end{pmatrix}^{-1}.
\]
Using the coordinates $(1,1)$, $(2,1)$, and $(2,2)$ gives a
$3\times3$ minor equal to
\[
  -2s\tau,
\]
so these three orbit elements are linearly independent.  Define
\[
  H_s:=\begin{pmatrix}s-\tau&-\gamma\\1&0\end{pmatrix}.
\]
One checks directly that
\[
  \operatorname{tr}(H_sK_i)=0
  \qquad(i=1,2,3),
\]
so the common trace equations contain the affine line
\[
  U(t):=I_2+tH_s.
\]
By nondegeneracy of the trace pairing, these three trace equations are
independent on the four-dimensional space $\Mat_2(k)$, so their common
solution set is exactly this affine line.  Its determinant is
\[
  \det U(t)=1+t(s-\tau)+\gamma t^2.
\]

If $\gamma=0$, then $\det C=0$ and
\[
  \det U(t)=1+t(s-\tau).
\]
At most one nonzero value of $t$ makes $U(t)$ singular, so there exists
$t\in k^\times$ with $U(t)$ invertible.  For such $t$, each $U(t)K_i$ has
trace $\tau$ and determinant $0$.  Its characteristic polynomial is
$Z(Z-\tau)$, which has distinct roots because $\tau\neq0$; hence
$U(t)K_i\in\Orb(C)$.

If $\gamma\neq0$, the nonzero value
\[
  t_*:=\frac{\tau-s}{\gamma}
\]
satisfies $\det U(t_*)=1$.  Thus every $U(t_*)K_i$ has the same trace and
determinant as $C$.  Since $\Delta\neq0$, the common characteristic polynomial
is separable, so all three products lie in $\Orb(C)$.  In either case the
same translated sample is compatible with $I_2$ and a distinct left multiplier.

\medskip
\noindent\emph{Nonzero-eigenvalue Jordan case.}
Assume $\tau\neq0$ and $\Delta=0$.  If the repeated eigenvalue is
$\lambda=\tau/2\neq0$, similarity and multiplication of all orbit samples by
$\lambda^{-1}$ reduce the construction to
\[
  C_0:=\begin{pmatrix}1&1\\0&1\end{pmatrix}.
\]
Define
\[
  K_1:=\begin{pmatrix}1&1\\0&1\end{pmatrix},\qquad
  K_2:=\begin{pmatrix}1&0\\1&1\end{pmatrix},\qquad
  K_3:=\begin{pmatrix}2&-1\\1&0\end{pmatrix}.
\]
The minor formed from the entries $(1,1),(1,2),(2,1)$ has determinant $2$,
so the three matrices are linearly independent.  With
\[
  U_1:=\begin{pmatrix}1&-1\\-1&2\end{pmatrix}
\]
one has $\det U_1=1$ and
\[
  U_1K_1=\begin{pmatrix}1&0\\-1&1\end{pmatrix},\qquad
  U_1K_2=\begin{pmatrix}0&-1\\1&2\end{pmatrix},\qquad
  U_1K_3=\begin{pmatrix}1&-1\\0&1\end{pmatrix}.
\]
These matrices are non-scalar with trace $2$ and determinant $1$, hence are
all in the nontrivial Jordan orbit of $C_0$.  Scaling back by $\lambda$
transports the same ambiguity to the original Jordan orbit.
\end{proof}

% =====================================================================
\section{Finite-field counting details}
\label{app:counting}
% =====================================================================

We give the finite-geometric counts underlying
Theorem~\ref{thm:exact-probabilities}.  Throughout, $q$ is odd.  We use
standard facts on nonsingular conics, nonsingular quadrics, and their
polarities over finite fields; see \cite{Hirschfeld1998}.

\subsection{Lines relative to a nonsingular conic}

We first record the elementary line counts that will be used twice.

\begin{lemma}[External, tangent, and secant lines]
\label{lem:conic-line-counts}
Let $N\subset\mathbb P^2(\mathbb F_q)$ be a nonsingular conic.  The
numbers of external, tangent, and secant lines to $N$ are,
respectively,
\[
  \frac{q(q-1)}2,
  \qquad
  q+1,
  \qquad
  \frac{q(q+1)}2.
  \tag{B.1}
\]
\end{lemma}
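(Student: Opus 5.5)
The plan is to prove the three counts by a double count over the points of $N$ and then solve two linear equations for the remaining classes of lines. I will use the standard facts from \cite{Hirschfeld1998}: a nonsingular conic over $\mathbb F_q$ with $q$ odd has exactly $q+1$ rational points, and every line meets $N$ in at most two points, since restricting the quadratic form to a line gives a binary quadratic form that is not identically zero. Lines are therefore external, tangent, or secant according as $|\ell\cap N|=0,1,2$.

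\textbf{Tangent lines.} Through each point $P\in N$ there is exactly one tangent line, namely the polar line of $P$. This line is well defined because the conic is nonsingular, and it lies in the tangent space at $P$, so it meets $N$ only at $P$. Every other line through $P$ meets $N$ in a second point: after projecting from $P$, the conic is parametrized by the pencil of lines through $P$, and each of the $q$ non-tangent lines contains exactly one further point. Since each tangent line contains exactly one point of $N$, there are exactly $q+1$ tangent lines, one per point.

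\textbf{Secant lines.} Each pair of distinct points of $N$ spans a secant line, and a secant line contains exactly two points of $N$, so no secant is counted twice. The number of secants is therefore $\binom{q+1}{2}=q(q+1)/2$.

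\textbf{External lines.} These are what remain after subtracting tangents and secants from all lines of $\mathbb P^2(\mathbb F_q)$:
\[
  q^2+q+1-(q+1)-\frac{q(q+1)}2=\frac{q(q-1)}2.
\]

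The only step that needs care is the tangent-line count, specifically the claim that through each point of $N$ exactly one line meets $N$ with multiplicity two. I would verify it directly by taking the normal form $xz=y^2$ with the point $(1{:}0{:}0)$. A line through this point that is not $z=0$ has the form $y=cz$. Substituting gives $xz=c^2z^2$, which has solutions $z=0$ and the point $x=c^2z$, so each such line is a secant. The line $z=0$ forces $y=0$, so it meets $N$ only at $(1{:}0{:}0)$ and is the tangent. Since every nonsingular conic is projectively equivalent to this normal form when $q$ is odd, the count holds for every nonsingular conic.
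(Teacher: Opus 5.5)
Your proof is correct and follows the paper's argument: $q+1$ points with exactly one tangent line each, $\binom{q+1}{2}$ secants coming from pairs of points, and the external lines as the complement among the $q^2+q+1$ lines. Your normal-form check with $xz=y^2$ at $(1{:}0{:}0)$ makes explicit the ``unique tangent per point'' fact that the paper simply cites; to apply it at an arbitrary point $P\in N$, also note the standard fact that coordinates can be chosen so that $N$ becomes $xz=y^2$ and $P$ becomes $(1{:}0{:}0)$ at the same time.
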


\begin{proof}
The conic has $q+1$ rational points, and each point has a unique tangent line,
so there are $q+1$ tangents.  Every unordered pair of distinct conic points
determines a unique secant, giving
\[
  \binom{q+1}{2}=\frac{q(q+1)}2
\]
secants.  Since $\mathbb P^2(\mathbb F_q)$ has $q^2+q+1$ lines, the remaining
$q(q-1)/2$ lines are external.
\end{proof}

\subsection{Regular semisimple four-sample counts}

Let $C$ be regular semisimple and define
\[
  C^\circ:=C-\frac{\tau}{2}I_2,
  \qquad
  \eta:=\det(C^\circ)=-\frac{\Delta}{4}\in\mathbb F_q^\times.
\]
Translation by the scalar part identifies the similarity orbit with
\[
  \{X\in\mathfrak{sl}_2(\mathbb F_q):\det X=\eta\}.
  \tag{B.2}
\]
Indeed, every matrix in (B.2) has the same separable characteristic polynomial
as $C^\circ$ and is therefore similar to it.  Homogenizing (B.2) gives the
nonsingular projective quadric
\[
  Q_\eta:
  \det X=\eta z^2
  \subset\mathbb P(\mathfrak{sl}_2\oplus\mathbb F_q z)
  \simeq\mathbb P^3.
\]
The plane at infinity $H_\infty=\{z=0\}$ meets $Q_\eta$ in the
nonsingular nilpotent conic
\[
  N_\infty:
  \det X=0
  \subset\mathbb P(\mathfrak{sl}_2).
\]
The closure is hyperbolic in the split case and elliptic in the nonsplit case.
Equivalently, adding the $q+1$ points of $N_\infty$ to the affine
orbit sizes in (8.1) gives $(q+1)^2$ and $q^2+1$, the numbers of rational
points on a hyperbolic and an elliptic quadric, respectively.

Every line $\ell\subset H_\infty$ is contained in $q+1$ projective planes, one
of which is $H_\infty$; hence exactly $q$ affine planes have line at infinity
$\ell$.  The following lemma determines which of those planes are tangent to
$Q_\eta$.

\begin{lemma}[Affine plane sections of the regular semisimple quadric]
\label{lem:regular-plane-sections}
Let $\ell\subset H_\infty$ be external, tangent, or secant to
$N_\infty$.

For the hyperbolic quadric, the numbers of tangent affine planes among the $q$
affine planes through $\ell$ are
\[
  0,
  \qquad 1,
  \qquad 2,
\]
respectively.  The unique tangent plane in the middle case is tangent at the
point of $N_\infty$, whereas the two tangent planes in the secant case
are tangent at affine points.

For the elliptic quadric, the corresponding numbers are
\[
  2,
  \qquad 1,
  \qquad 0.
\]
The two tangent planes over an external line are tangent at affine points, and
the unique tangent plane over a tangent line is tangent at the point at
infinity.
\end{lemma}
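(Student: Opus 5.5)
We would prove the lemma with the polarity of the nonsingular quadric $Q_\eta$. The key fact is standard (see \cite{Hirschfeld1998}). A plane $\pi\subset\mathbb P^3$ is tangent to $Q_\eta$ if and only if its pole $\pi^\perp$ lies on $Q_\eta$, and then $\pi$ is tangent exactly at $\pi^\perp$. The planes through a line $\ell$ correspond bijectively to the points of the polar line $\ell^\perp$. So counting tangent planes through $\ell$ reduces to computing $\ell^\perp\cap Q_\eta$ and then discarding the pole of $H_\infty$.

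\emph{Step 1: locate $\ell^\perp$.} The defining form $\det X-\eta z^2$ is an orthogonal sum of the ternary form $\det|_{\mathfrak{sl}_2}$ and the form $-\eta z^2$. Hence the pole of $H_\infty=\{z=0\}$ is the affine origin $O=[0:1]$, which is not on $Q_\eta$ because $\eta\neq0$. Let $p\in H_\infty$ be the pole of $\ell$ with respect to the conic $N_\infty$. By orthogonality, $\ell^\perp$ is the line through $O$ and $p$, namely $\{[ap:b]\}$. Of the $q+1$ planes through $\ell$, the plane $H_\infty$ corresponds to $O\notin Q_\eta$. Therefore the tangent affine planes through $\ell$ correspond exactly to the points of $Q_\eta$ on $\ell^\perp$, and the tangency points are those same points.

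\emph{Step 2: solve on $\ell^\perp$.} A point $[ap:b]$ lies on $Q_\eta$ if and only if $a^2\det(p)=\eta b^2$.
\begin{itemize}
  \item If $\ell$ is tangent to $N_\infty$, then $p\in N_\infty$, so $\det p=0$. This forces $b=0$, and the only solution is $p$ itself. That gives exactly one tangent plane, tangent at the point at infinity, for either type of quadric.
  \item Otherwise $\det p\neq0$. Then $b=0$ forces $a=0$, which is impossible, so every solution is affine. There are two solutions if $\eta\det(p)$ is a nonzero square in $\mathbb F_q$, and none otherwise.
\end{itemize}

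\emph{Step 3: translate into secant versus external, and split versus nonsplit.} This is the step needing care. On $\mathfrak{sl}_2$ write $X=\begin{pmatrix}x&y\\w&-x\end{pmatrix}$, so $\det X=-x^2-yw$. Its Gram determinant is $\tfrac14$, a square. The restriction of this form to the line $\ell=p^\perp$ has discriminant $\det(p)$ modulo squares. The line $\ell$ is secant exactly when that binary form is isotropic, that is, when $-\det(p)$ is a square. On the quadric side, $\eta=-\Delta/4$, and $C$ is split exactly when $\Delta$ is a square, that is, when $-\eta$ is a square. For non-tangent $\ell$ we have $\eta\det(p)=(-\eta)(-\det p)$, so this product is a square exactly when both factors are squares or both are non-squares.
\begin{itemize}
  \item In the hyperbolic (split) case, secant lines give $2$ tangent planes and external lines give $0$.
  \item In the elliptic (nonsplit) case, external lines give $2$ and secant lines give $0$.
\end{itemize}
In both cases the tangency points are affine, as Step 2 showed.

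The main obstacle is getting the square classes in Step 3 right, in particular the sign convention that links isotropy of the restricted binary form to $-\det(p)$. We would double-check it by testing $\ell=\{x=0\}$, whose pole is $p=\operatorname{diag}(1,-1)$: this line meets $N_\infty$ in the two points where $yw=0$, so it is secant, and indeed $-\det p=1$ is a square.
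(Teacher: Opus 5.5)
Your proof is correct and is essentially the paper's own polarity argument. Planes through $\ell$ correspond to points of the polar line spanned by the conic-pole of $\ell$ and the pole $e_z$ of $H_\infty$ (which lies off the quadric), and one then solves a binary quadratic equation on that line; the only difference is that the paper first reduces to the normal-form lines $c+rb=0$ by citing transitivity of the orthogonal group of $N_\infty$ on each line type, whereas you handle an arbitrary $\ell$ through its pole $p$ and the square class of $-\det p$, which makes that citation unnecessary.
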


\begin{proof}
Write
\[
  X=\begin{pmatrix}a&b\\ c&-a\end{pmatrix}.
\]
After multiplying the defining equation by $-1$, the quadric and its conic at
infinity are
\[
  a^2+bc+\eta z^2=0,
  \qquad
  N_\infty:\ a^2+bc=0.
\]
The associated polar form is
\[
  B((a,b,c,z),(a',b',c',z'))
  =2aa'+bc'+cb'+2\eta zz'.
\]
The orthogonal group of the conic is transitive on lines of each of the three
types (see, for example, \cite{Hirschfeld1998}), so it is enough to consider
\[
  \ell_r:\ c+rb=0,\qquad z=0,
\]
where $r=0$, a nonzero square, or a nonsquare according as $\ell_r$ is
tangent, secant, or external.  Indeed, on $\ell_r$ the conic equation becomes
$a^2-rb^2=0$.

Let
\[
  v_r=(0,1,r,0),\qquad e_z=(0,0,0,1).
\]
Then $\ell_r=v_r^\perp\cap H_\infty$ and
\[
  \ell_r^\perp=\mathbb P\langle v_r,e_z\rangle.
\]
Under the polarity of the quadric, planes through $\ell_r$ correspond to
points of this polar line, and the plane is tangent precisely when its pole is
on the quadric.  Since $H_\infty$ is not tangent to the quadric, all such
points give affine tangent planes.  On $\ell_r^\perp$ the quadric equation is
\[
  r t^2+\eta s^2=0.
\]
For $r=0$ this has the unique projective solution $s=0$, namely $v_0\in
N_\infty$, giving the tangent-at-infinity case.

Suppose now that $r\ne0$.  There are two projective solutions exactly when
$-r/\eta$ is a square, and none otherwise.  The ambient quadric is hyperbolic
exactly in the split case, equivalently when $-\eta=\Delta/4$ is a square.
Thus in the hyperbolic case a secant line ($r$ square) gives two solutions and
an external line ($r$ nonsquare) gives none.  In the elliptic case the
conclusions are reversed.  When $r\ne0$, every solution has $s\ne0$, hence its
pole is affine.  This proves both the counts and the stated locations of
tangency.
\end{proof}

We can now derive the section tables rather than assume them.  A nontangent
plane meets the quadric in a nonsingular projective conic with $q+1$ rational
points.  If its line at infinity is external, tangent, or secant to
$N_\infty$, then the numbers of affine points are therefore
\[
  q+1,
  \qquad q,
  \qquad q-1.
\]
No three distinct points of a nonsingular conic are collinear, so a section
with $m$ affine points contains
\[
  t=m(m-1)(m-2)
\]
ordered noncollinear triples.

In the hyperbolic case, Lemmas~\ref{lem:conic-line-counts} and
\ref{lem:regular-plane-sections} give the following counts of nontangent
planes:
\[
\begin{array}{c|c|c|c}
\text{section type}&\#\text{planes}&m&t\\ \hline
\text{external conic}
&q^2(q-1)/2&q+1&(q+1)q(q-1)\\[1mm]
\text{infinity-tangent conic}
&q^2-1&q&q(q-1)(q-2)\\[1mm]
\text{secant conic}
&q(q+1)(q-2)/2&q-1&(q-1)(q-2)(q-3).
\end{array}
\tag{B.3}
\]
Indeed, an external line contributes all $q$ affine planes; a tangent line
contributes $q-1$ nontangent affine planes; and a secant line contributes
$q-2$.

The remaining hyperbolic sections are tangent.  There are $q(q+1)$ affine
points on the quadric, hence $q(q+1)$ planes tangent at affine points.  Such a
section is the union of two rational generator lines meeting at the affine
tangency point.  Each generator contributes $q$ affine points, so
\[
  m=2q-1.
\]
Subtracting the ordered triples lying entirely on one component from all
ordered triples of distinct affine points gives
\[
\begin{aligned}
 t
 &=(2q-1)(2q-2)(2q-3)-2q(q-1)(q-2)=6(q-1)^3.
\end{aligned}
\]
There are also $q+1$ tangent planes at the points of
$N_\infty$.  Their two generator lines meet only at the point at
infinity, so their affine parts are disjoint, giving $m=2q$ and
\[
\begin{aligned}
 t
 =(2q)(2q-1)(2q-2)-2q(q-1)(q-2)=6q^2(q-1).
\end{aligned}
\]
Thus the two tangent rows are
\[
\begin{array}{c|c|c|c}
\text{section type}&\#\text{planes}&m&t\\ \hline
\text{affine tangent pair}
&q(q+1)&2q-1&6(q-1)^3\\[1mm]
\text{infinity tangent pair}
&q+1&2q&6q^2(q-1).
\end{array}
\tag{B.4}
\]

For the elliptic quadric, Lemma~\ref{lem:regular-plane-sections} gives the
nontangent table
\[
\begin{array}{c|c|c|c}
\text{section type}&\#\text{planes}&m&t\\ \hline
\text{external}
&q(q-1)(q-2)/2&q+1&(q+1)q(q-1)\\[1mm]
\text{tangent at infinity}
&q^2-1&q&q(q-1)(q-2)\\[1mm]
\text{secant}
&q^2(q+1)/2&q-1&(q-1)(q-2)(q-3).
\end{array}
\tag{B.5}
\]
Here an external line contributes $q-2$ nontangent affine planes, a tangent
line contributes $q-1$, and a secant line contributes all $q$.  A tangent
plane to an elliptic quadric contains no noncollinear triple of rational
points: its projective section has only the point of tangency as a rational
point.  Hence tangent planes contribute zero to the four-sample count.

Four affine orbit points are affinely independent exactly when the first three
are noncollinear and the fourth lies outside their affine plane.  Therefore a
plane section containing $m$ affine points and $t$ ordered noncollinear triples
contributes
\[
  t\bigl(|\Orb(C)|-m\bigr)
  \tag{B.6}
\]
successful ordered quadruples.

In the split case, summing (B.6) over (B.3) and (B.4) gives
\[
  q^2(q^2-1)(q^4+3q^3-7q^2+11q-10)
\]
successful ordered quadruples.  Division by
$|\Orb(C)|^4=[q(q+1)]^4$ gives (8.2).  In the nonsplit case, summing over
(B.5) gives
\[
  q^2(q-2)(q-1)^2(q+1)(q^2-2q-1),
\]
and division by $|\Orb(C)|^4=[q(q-1)]^4$ gives (8.3).

\subsection{Trace-zero semisimple triples}

Assume now $\tau=0$ and $\nu\neq0$.  Write a traceless matrix as
\[
  X=\begin{pmatrix}a&b\\c&-a\end{pmatrix},
  \qquad
  Q(X):=\det X=-a^2-bc.
  \tag{B.7}
\]
The projective zero locus $Q=0$ is a nonsingular conic
$N\subset\mathbb P(\mathfrak{sl}_2)\simeq\mathbb P^2$.  For a
two-dimensional linear subspace $L\subset\mathfrak{sl}_2$, let $\ell=\mathbb
P(L)$ and
\[
  m_L:=|L\cap\Orb(C)|.
\]

\begin{lemma}[Orbit points on projective lines]
\label{lem:tracezero-line-sections}
The numbers of lines of each type are given by (B.1), and
\[
\begin{array}{c|c|c}
\text{line type}&m_L\text{ (split)}&m_L\text{ (nonsplit)}\\ \hline
\text{secant}&q-1&q-1\\[1mm]
\text{external}&q+1&q+1\\[1mm]
\text{tangent}&2q&0.
\end{array}
\]
\end{lemma}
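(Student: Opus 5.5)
Since $\nu\neq0$ and $\tau=0$, the characteristic polynomial $Z^2+\nu$ has discriminant $\Delta=-4\nu\neq0$, so it is separable. By Lemma~\ref{lem:cyclic-companion}, every traceless matrix with determinant $\nu$ is cyclic and similar to $C$. Hence, in the coordinates (B.7),
\[
  \Orb(C)=\{X\in\mathfrak{sl}_2(\mathbb F_q):Q(X)=\nu\},
\]
and $m_L$ is the number of solutions of $Q|_L=\nu$ on the two-dimensional space $L$. The line counts are just Lemma~\ref{lem:conic-line-counts} applied to the nonsingular conic $N$. So the plan is to classify the binary quadratic form $Q|_L$ according to the position of $\ell$ relative to $N$, and then count representations of the nonzero value $\nu$ in each case.

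First I will match the three line types with the three kinds of binary form. A point of $\ell\cap N$ is exactly an isotropic line of $Q|_L$. A secant line has two rational isotropic points, so $Q|_L$ is nondegenerate and isotropic, i.e.\ a hyperbolic plane. An external line has no isotropic points, so $Q|_L$ is anisotropic and nondegenerate. A tangent line $\ell$ contains the point $p$ of tangency, and $\ell=p^\perp$ for the polarity of $Q$; hence $p$ lies in the radical of $Q|_L$. Since $Q|_L\neq0$, the form has rank one: $Q|_L=\lambda x^2$ with $\lambda\neq0$ in suitable coordinates $(x,y)$.

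Next I count solutions in each case. For a hyperbolic plane, $Q|_L\cong xy$, and $xy=\nu$ has exactly $q-1$ solutions. For an anisotropic plane, $Q|_L$ is a nonzero multiple of the norm form of $\mathbb F_{q^2}/\mathbb F_q$. The norm $\mathbb F_{q^2}^\times\to\mathbb F_q^\times$ is a surjective homomorphism, so each fibre has size $q+1$, and any nonzero value is attained exactly $q+1$ times. For the rank-one form, $\lambda x^2=\nu$ gives $2q$ solutions (two values of $x$, with $y$ free) if $\nu/\lambda$ is a square, and $0$ solutions otherwise. These counts do not depend on split versus nonsplit in the first two cases, which matches the table.

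The main obstacle is the tangent row: I must show that the square class of $\lambda$ is the same for every tangent line and determine it. Conjugation by $\GL_2(\mathbb F_q)$ preserves $\det$, hence $Q$. It acts transitively on the rational points of $N$ (the nonzero nilpotent lines), and therefore on tangent lines. So it suffices to check one tangent line. I take $p=E_{12}$: its polar is $L=\{c=0\}$, on which $Q=-a^2$. The equation $-a^2=\nu$ has $2q$ solutions $(a,b)$ when $-\nu$ is a square, and none otherwise. Finally, $Z^2+\nu$ splits over $\mathbb F_q$ exactly when $-\nu$ is a square, i.e.\ exactly when $C$ is split. This gives $m_L=2q$ in the split case and $m_L=0$ in the nonsplit case, completing the table.
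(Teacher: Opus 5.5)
Your proof is correct and follows essentially the same route as the paper: the same trichotomy of restricted forms (hyperbolic $xy$ on secants, a scaled norm form on external lines, and the tangent case reduced by conjugation to $c=0$ with $Q=-a^2$). The only difference is cosmetic. You count the vectors with $Q|_L=\nu$ directly from the fibres of $xy$ and of the norm map, whereas the paper counts projective points in the correct square class and then doubles for the pair $\pm K$.
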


\begin{proof}
A projective point $[X]\in\ell\setminus N$ contributes vectors to the
level set $Q(X)=\nu$ precisely when the value of $Q$ at that projective point has
the same square class as $\nu$; in that case there are exactly two such vectors,
which differ by sign.

If $\ell$ is secant, the restricted binary quadratic form is equivalent up to a
nonzero scalar to $xy$.  Away from the two zeros, the values are represented
by $\lambda t$ with $t\in\mathbb F_q^\times$, so exactly half of the $q-1$ remaining projective points have either
prescribed square class.  Hence
$m_L=q-1$ in both regimes.

If $\ell$ is external, the restricted form is anisotropic and is equivalent,
up to a nonzero scalar, to the norm form from $\mathbb F_{q^2}$ to
$\mathbb F_q$.  On
$\mathbb F_{q^2}^\times/\mathbb F_q^\times$, which has $q+1$ elements, the
square class of the norm is well defined because rescaling by
$\mathbb F_q^\times$ changes the norm by a square.  The induced map to
$\mathbb F_q^\times/(\mathbb F_q^\times)^2$ is surjective, so each square
class occurs on $(q+1)/2$ projective points.  Thus $m_L=q+1$ in either regime.

Finally consider a tangent line.  By conjugation we may take the tangency
point to be $[E_{12}]$.  The tangent line is then $c=0$, and (B.7) restricts
to
\[
  Q=-a^2.
\]
Thus the $q$ points of the tangent line other than the point of tangency
have determinant in the square class of $-1$.  Since the trace-zero characteristic polynomial has
discriminant $-4\nu$, the orbit is split precisely when $-\nu$ is a square, i.e.
precisely when $\nu$ has the square class of $-1$.  Therefore these $q$ projective points contribute two representatives
in the split case and none in the nonsplit case, giving $2q$ and $0$.
\end{proof}

For a fixed $L$, each projective orbit point contributes exactly the two
vectors $K$ and $-K$.  Hence, after choosing the first vector in
$L\cap\Orb(C)$, exactly two choices for the second vector are dependent
with it.  The number of ordered independent pairs is therefore
\[
  m_L(m_L-2).
\]
The third vector must lie outside $L$, leaving $|\Orb(C)|-m_L$ choices.
Each ordered spanning triple determines $L$ uniquely from its first two vectors,
so summing
\[
  m_L(m_L-2)\bigl(|\Orb(C)|-m_L\bigr)
\]
over all projective lines counts every successful ordered triple exactly once.
Using Lemmas~\ref{lem:conic-line-counts} and
\ref{lem:tracezero-line-sections}, the split case gives
\[
  q(q-1)(q+1)(q^3+2q^2-4q-1)
\]
successful ordered triples, and division by $[q(q+1)]^3$ gives (8.4).  The
nonsplit case gives
\[
  q(q-1)^2(q+1)(q^2-3q+1),
\]
and division by $[q(q-1)]^3$ gives (8.5).

\subsection{Nilpotent triples}

For nonzero nilpotent $C$, the orbit has $q^2-1$ elements and its
projectivization is exactly the nilpotent conic $N$.  Each of the
$q+1$ conic points has $q-1$ nonzero scalar representatives, all of which lie
in the same nonzero nilpotent orbit.  A projective line meets a nonsingular
conic in at most two points, so three orbit elements span
$\mathfrak{sl}_2(\mathbb F_q)$ if and only if their three projective points are
distinct.

The first sample has $q^2-1$ choices.  The second must avoid the $q-1$
representatives of the first projective point, leaving
\[
  q^2-q
\]
choices.  The third must avoid the representatives over both previous
projective points, leaving
\[
  (q^2-1)-2(q-1)=(q-1)^2
\]
choices.  Hence the number of successful ordered triples is
\[
  (q^2-1)(q^2-q)(q-1)^2,
\]
and division by $(q^2-1)^3$ gives
\[
  \frac{q(q-1)}{(q+1)^2},
\]
which is (8.6).

\section*{}
\subsection*{Use of generative AI}
During the preparation of this work, the author used OpenAI's ChatGPT to
assist with language refinement, organizational review, bibliographic
verification, and consistency checking.  After using this tool, the author
reviewed and edited the content
as needed and takes full responsibility for the content of the publication.

%\printbibliography

\end{document}